\documentclass[11pt, a4paper]{article}
\usepackage[a4paper, margin=1in]{geometry} 

% Dominic Keehan, 4/25

\usepackage[T1]{fontenc}
\usepackage{lmodern}

% Size and spacing:
\usepackage[a4paper, margin=1in]{geometry}
\usepackage{setspace}
%\singlespacing
\onehalfspacing % 11pt one-half spaced is standard. 

% Formatting:
\usepackage{amssymb, amsfonts, amsthm, amsmath, mathtools}
\usepackage{nicefrac}
\usepackage{bm, bbm}
\usepackage{enumitem}
\usepackage{booktabs, multirow, multicol, makecell} % Tables.
\usepackage{comment}
\usepackage{hyperref}

\newtheoremstyle{proclaim}
  {3pt}        % Space above.
  {3pt}        % Space below.
  {\slshape}  % Body font.
  {}        % Indent amount.
  {\bfseries} % Theorem head font.
  {.}       % Punctuation after theorem head.
  { }    % Space after theorem head.
  {}        % Theorem head spec.

\theoremstyle{proclaim}

\newtheorem{lemma}{Lemma}
\newtheorem{proposition}{Proposition}
\newtheorem{theorem}{Theorem}

\theoremstyle{definition}

\newtheorem{example}{Example}

\theoremstyle{remark}
\newtheorem*{remark*}{Remark}

\usepackage{appendix}

% Miscellaneous symbols:
\newcommand{\blank}{\kern.15em\cdot\kern.15em}% Blank function input.
\makeatletter
\newcommand*{\transpose}{{\mathpalette\@transpose{}}}
\newcommand*{\@transpose}[2]{\raisebox{\depth}{$\m@th#1\kern0.0556em\intercal$}}
\makeatother

\newcommand{\awDistance}{d\kern-.12em l} % Attouch-Wets distance.
\newcommand{\gtrasymp}{\mathrel{\raisebox{2.25pt}{$>$}\kern-0.75em\raisebox{-3.25pt}{$\smallfrown$}}}

\newcommand{\tspace}{\kern0.0556em} % Tiny space
\newcommand{\ttspace}{\tspace\tspace}
\newcommand{\ntspace}{\kern-0.0556em}

% Sets:
\newcommand{\reals}{\mathbb{R}}

\newcommand{\ball}{\mathbb{B}}
\newcommand{\nats}{\mathbb{N}}

% Text style operators:
\newcommand{\tsum}[2]{\mathop{{\textstyle{\sum_{#1}^{#2}}}}}
\newcommand{\tMeasureSum}{{\textstyle\sum}}

\newcommand{\tinf}[1]{\mathop{{\textstyle{\inf_{#1}}}}}
\newcommand{\tmin}[1]{\mathop{{\textstyle{\min_{#1}}}}}

%\newcommand{\msum}{{\sum}}

% Arrows:

%{\rightsquigarrow}

% Function spaces:

 % Bounded functions.
 % Bounded and continuous functions.

% Differential operators:
\DeclareMathAlphabet{\mathsl}{T1}{cmr}{m}{sl}
\newcommand{\drv}{\mathsl{d}}

% Equalities:
\newcommand{\defeq}{\coloneq}

%{\overset{{\tiny\triangle}}{=}}

% Operators:

\DeclareMathOperator*{\esssup}{esssup} % Consider underset.
\DeclareMathOperator{\diam}{diam}

% Optimisations:
\DeclareMathOperator*{\minimize}{minimize}
\DeclareMathOperator*{\maximize}{maximize}

% Words:
\newcommand{\subjectTo}{\text{subject to}}

% Typographical adjustments:
\usepackage{microtype}

% Hyphenations:
\pretolerance = 500
\tolerance = 1000
\emergencystretch = 0pt
\righthyphenmin = 3
\lefthyphenmin = 2

\usepackage{csquotes} % Recommended for babel.
\usepackage[british]{babel} % British patterns.
\hyphenation{sto-chast-ic}
\hyphenation{semi-def-i-nite}
\hyphenation{La-gran-gi-an}
\hyphenation{non-smooth}
\hyphenation{sub-diffe-ren-tial}

% Random vectors:

\newcommand{\rxi}{\bm{\xi}}
\newcommand{\rzeta}{\bm{\zeta}}

% Probability operators:
\newcommand{\Expt}{\mathbb{E}} % Expectation.
 % Variance.
\newcommand{\Prob}{\mathbb{P}} % Probability.
\newcommand{\Qrob}{\mathbb{Q}} % Alternate probability.
\newcommand{\pmProb}{\mathbbm{1}} % Point-mass probability.
\newcommand{\given}{\kern.1em\vert\kern.1em}
\newcommand{\bigGiven}{\kern-.15em\bigm\vert\kern-.15em}
\newcommand{\BigGiven}{\kern-.15em\Bigm\vert\kern-.15em}
\newcommand{\biggGiven}{\kern-.15em\biggm\vert\kern-.15em}
\newcommand{\BiggGiven}{\kern-.15em\Biggm\vert\kern-.15em}

% Symbol redefinitions:
%\usepackage{mathtools} % Import \coloneqq.
%\newcommand{\coloneq}{\coloneqq}
%\let\subseteq\relax
%\newcommand{\subseteq}{\subset}

% Graphics:
\usepackage{xcolor}
\usepackage{graphicx}
\usepackage{tikz}
\usetikzlibrary{positioning}
\usepackage[labelformat=simple]{subcaption}
 % Equation-like numbering.
\usepackage{float} % Figure autopositioning ([H])

% Copy-editing keys:
\definecolor{myblue}{RGB}{0, 119, 180}

\usepackage[normalem]{ulem}

\renewcommand{\epsilon}{\varepsilon}

% Calligraphics:
\newcommand{\cA}{{\cal A}}
\newcommand{\cB}{{\cal B}}

\newcommand{\cP}{{\cal P}}

\newcommand{\cX}{{\cal X}}

% Size changes:
%\newcommand{\raisemath}[2]{\raisebox{#1}{$#2$}}
%\newcommand{\raisemath}[2]{\ensuremath{\raisebox{#1}{\ensuremath{#2}}}}
%\newcommand{\raisefrac}[3]{\genfrac{}{}{}{}{\raisebox{#1}{$#2$}}{\raisebox{-#1}{$#3$}}}
%\usepackage{scalerel}
%\newcommand{\scalemath}[2]{\scaleto{#2}{#1}}
%\usepackage{relsize} % ${\mathlarger "math"}$ etc. 

% Measure input:

%\settowidth{\measuredLength}{"text"}
%\the\measuredLength % Displays length of "text".

%\usepackage[labelfont=bf,textfont=bf,size=footnotesize,justification=centering]{subcaption}

% Bibliography formatting:
\usepackage[backend = biber,
            giveninits = false,
            maxbibnames = 10,
            doi = false, 
            isbn = false, 
            url = false, 
            eprint = false,
            style = ext-numeric, % `ext-' important to get 'titlecase:title' specification to work.
            %style = ext-authoryear,
            ]{biblatex}
\DeclareNameAlias{sortname}{given-family} 
\renewbibmacro{in:}{} % No `in journal' (good for presentations).
\DeclareDelimFormat{finalnamedelim}{
    \ifnumgreater{\value{liststop}}{2}
    {\unskip,\addspace\&} % If true (3+ authors)
    {\&} % If false (2 authors).
}

%\DeclareDelimFormat[parencite]{finalnamedelim}{\addspace\&\space}
%\DeclareDelimFormat[textcite]{finalnamedelim}{\addspace\&\space}
 %\def\bibfont{\small}%
 %\def\bibsep{\smallskipamount}%
 %\def\bibhang{24pt}%
 %\def\newblock{\ }%
% \def\BIBand{\&}

\DeclareFieldFormat[article,inbook,incollection,inproceedings,patent,thesis,unpublished]{titlecase:title}{\MakeSentenceCase*{#1}}

\DeclareFieldFormat[article,inbook,incollection,inproceedings,patent,thesis,unpublished]{title}{``#1\addperiod''} % American.

%\DeclareFieldFormat[article,inbook,incollection,inproceedings,patent,thesis,unpublished]{title}{`#1'\addperiod} % British

\newcommand{\daySuffix}[1]{%
  \begingroup
    % last digit of the number
    \edef\lastdigit{\number\numexpr #1 - 10*(#1/10)\relax}%
    % last two digits of the number
    \edef\lasttwo{\number\numexpr #1 - 100*(#1/100)\relax}%
    % teens are always 'th'
    \ifnum\lasttwo=11 th%
    \else\ifnum\lasttwo=12 th%
    \else\ifnum\lasttwo=13 th%
      th%
    \else
      % otherwise use last digit
      \ifcase\lastdigit
        th% 0
      \or st% 1
      \or nd% 2
      \or rd% 3
      \else th% 4–9
      \fi
    \fi\fi\fi
  \endgroup
}

\newcommand{\wordMonth}{\ifcase \month \or January \or February \or March \or April \or May \or June \or July \or August \or September \or October \or November \or December \fi}

\newcommand{\Neff}{N_{\mathrm{eff}}}
\newcommand{\Teff}{N_{\mathrm{eff}}}

\let\epsilon\relax
\newcommand{\epsilon}{\varepsilon}

\DeclareMathOperator{\logRange}{\texttt{LogRange}}
\DeclareMathOperator{\linRange}{\texttt{LinRange}}

\newcommand{\weights}{\mathcal{W}}

\let\esssup\relax
\DeclareMathOperator*{\esssup}{ess-sup}

\newcommand{\costUnder}{c_{\mathrm{u}}}
\newcommand{\costOver}{c_{\mathrm{o}}}

\bibliography{bib}

\makeatletter
\def\@maketitle{%
  \newpage
%  \null
%  \vskip 2em
  \begin{center}%
  \let \footnote \thanks
    {\LARGE \@title \par}%
    \vskip 1.5em%
    {\large
      \lineskip .5em%
      \begin{tabular}[t]{c}%
        \@author
      \end{tabular}\par}%
    %\vskip 1em%
    %{\@date}%
  \end{center}%
  %\par
  }%\vskip .25em}
\makeatother
\title{\textbf{Don't Look Back in Anger:\\Wasserstein Distributionally Robust Optimization\\with Nonstationary Data}}%%%
%\title{\Large{\textbf{Don't Look Back in Anger: Wasserstein Distributionally Robust Optimization with Nonstationary Data}}}%%%
\author{\textsl{Dominic S.\ T.\ Keehan} \\ {\small University of Auckland} \\ {\small \texttt{dkee331@aucklanduni.ac.nz}}\\ \and \textsl{Edward J.\ Anderson} \\ {\small Imperial Business School}\\ {\small \texttt{e.anderson@imperial.ac.uk}} \and \textsl{Wolfram Wiesemann} \\ {\small Imperial Business School}\\ {\small \texttt{ww@imperial.ac.uk}}}%%%

\begin{document}

\maketitle
%\vspace*{-28pt}%%%

\noindent \textbf{Abstract:} We study data-driven decision problems where historical observations are generated by a time-evolving distribution whose consecutive shifts are bounded in Wasserstein distance. We address this nonstationarity using a distributionally robust optimization model with an ambiguity set that is a Wasserstein ball centered at a \emph{weighted} empirical distribution, thereby allowing for the time decay of past data in a way which accounts for the drift of the data-generating distribution. Our main technical contribution is a concentration bound for weighted empirical distributions that explicitly captures both the effective sample size (i.e., the equivalent number of equally weighted observations) and the distributional drift. Using our concentration bound, we select observation weights that optimally balance variance, determined by the effective sample size, and drift, induced by the temporal changes in the data-generating process. The family of optimal weightings reveals a polynomial relationship between the order of the Wasserstein ambiguity ball and the time-decay profile of the optimal weights. We further characterize how the ambiguity radius must grow with the distributional drift to guarantee a prescribed confidence level. Classical weighting schemes, such as time windowing and simple exponential smoothing, emerge as special cases of our framework, for which we derive principled choices of parameters. Numerical experiments demonstrate the effectiveness of our proposed approach.

\bigskip
\noindent \textbf{Keywords:} Nonstationary data, Wasserstein distances, distributionally robust optimization.%%%

\medskip
\noindent \textbf{Date:} \number\day-\daySuffix{\day} \wordMonth \number\year.%%%

\section{Introduction}
Many practical decision problems involve problem data that are uncertain at the point of decision. Examples include inventory control with uncertain demands, maintenance scheduling under uncertain failures, online advertising with uncertain click-through rates, and portfolio allocation under uncertain market conditions.

Classical stochastic programming models the uncertain quantities as random variables and optimizes a risk measure---such as the expectation or (conditional) value-at-risk---under a known data-generating distribution. In practice, however, this distribution is rarely known. Data-driven approaches therefore replace the unknown distribution with an estimate formed from historical observations. The most prominent example is the sample average approximation~(SAA), which optimizes against the empirical distribution that assigns equal mass to past samples. While computationally appealing and asymptotically optimal, SAA is prone to overfitting in small- to moderate-sample regimes---a phenomenon often described as the ``optimizer’s curse.'' Distributionally robust optimization (DRO) mitigates overfitting by optimizing against the worst distribution in an ambiguity set that contains all distributions deemed plausible given the data. Among different ambiguity models, Wasserstein balls around an empirical reference distribution have emerged as a common choice since they respect the geometry of the sample space, yield finite-sample performance guarantees, and often lead to tractable formulations.

The vast majority of research on data-driven optimization presumes that observations are independent and identically distributed over time. In practice, this assumption is often untenable: consumer preferences shift, supply conditions change, and competitors, whose actions are often modeled as exogenous uncertainty, adapt over time. Under nonstationarity, pooling all past observations with equal weight---as in standard SAA and Wasserstein DRO---can be harmful: looking at a long historical sequence of observations can increase statistical precision but may bias estimates toward outdated regimes, whereas focusing only on recent data can reduce bias but may inflate variance.

This paper proposes a Wasserstein DRO framework tailored to nonstationary data. We model the evolution of the data-generating process as a sequence of unknown distributions whose successive changes are bounded in Wasserstein distance. To derive robust decisions, we center our ambiguity sets at \emph{weighted} empirical distributions that downweight older samples. We then consider the fundamental variance--drift trade-off that governs data-driven decision making 
under nonstationarity. More specifically, our contributions are as follows.
\begin{enumerate}
    \item[\emph{(i)}] \textbf{Modeling nonstationarity via Wasserstein shifts.}
    We propose a DRO model that permits changes to the data-generating distribution over time, with consecutive shifts bounded in Wasserstein distance. Our ambiguity sets are centered at weighted empirical distributions, enabling the principled time decay of historical information within DRO.
    
    \item[\emph{(ii)}] \textbf{Weighted concentration for Wasserstein distances.} We extend the classical concentration bounds of \parencite{fournier2015rate} from equally weighted to weighted empirical distributions and obtain finite-sample guarantees for nonstationary data, with rates that explicitly reflect both the effective sample size and the magnitude of distributional drift.
    \item[\emph{(iii)}]\textbf{Optimizing the variance--drift trade-off.} We show how to choose weights that optimally balance statistical variance against distributional drift in our concentration bound. Our analysis covers natural weight families (e.g., time windowing and exponential decay) and characterizes optimal weights within our framework.
\end{enumerate}
Taken together, these contributions yield a conceptually simple and computationally practical method for decision-making with nonstationary data: center a Wasserstein ambiguity set at a weighted empirical distribution, set the weights---guided by our weighted concentration bounds\nobreakdash---to balance variance and drift, and calibrate the ambiguity radius via cross-validation.

Our work intersects with, and contributes to, two strands of the literature: DRO under nonstationary data-generating processes and DRO with heterogeneous data sources.

The literature on DRO with nonstationary data relaxes the ubiquitous identically distributed assumption by modeling temporal dependence or drift. For example, \parencite{DouAnitescu2019ORL, HuChenWang2025OR, SutterVanParysKuhn2020ArXiv} study Wasserstein DRO models where data are generated by vector autoregressive processes with unknown parameters. Likewise, \parencite{LiSutterKuhn2021ICML, SutterVanParysKuhn2020ArXiv} analyze Wasserstein DRO models where data arise from unknown finite-state Markov chains. In addition, \parencite{PunWangYan2023MSOM} models nonstationarity via a regime-switching Wasserstein ambiguity set, and \parencite{DuchiGlynnNamkoong2021MOR} investigates $\phi$-divergence DRO where data are generated by certain classes of Markov chains. A common feature of this literature is the imposition of parametric or Markovian structure on the temporal dynamics. Instead, we only assume that successive data-generating distributions differ in Wasserstein distance by no more than a fixed amount, without committing to a specific parametric time-series model.

In contrast, the literature on DRO with heterogeneous data sources leverages samples from multiple distributions (sources) that are close to the data-generating (target) distribution in Wasserstein distance to construct an ambiguity set for the target. Focusing on two sources and least squares estimation, \parencite{TaskesenEtAl2021ICML} proposes either interpolating between the ambiguity sets centered at each source or intersecting them. For logistic regression with two sources that are subjected to adversarial attacks, \parencite{SelviEtAl2025UAI} intersects the two ambiguity sets. In the first work that studies more than two sources, \parencite{RychenerEtAl2024HeteroDRO} intersects Wasserstein balls centered at all sources and optimizes against the worst distribution in the intersection. The paper shows that the problem is NP-hard in general but can be solved in polynomial time when either the number of sources or the dimension of the uncertain parameters is fixed. Finally, \parencite{GuoJiangShen2025MRDRO} studies a dynamic setting where source reliability varies over time and proposes mechanisms to update trust in each source. The literature on DRO with heterogeneous data sources typically assumes that many  samples are available per source. On the other hand, viewing each time step as a separate source, our setting provides exactly one observation per source. We will see that this renders the existing heterogeneous-source guarantees unsuited. Instead, our approach aggregates over time via weighted empirical distributions while explicitly controlling for the per-step Wasserstein~drift.

Unlike the bandit and online optimization literature, which studies sequential decision-making and regret minimization (e.g., \parencite{bubeck-bandits, hazan-online}), our goal is to construct a single data-driven ambiguity set with finite-sample coverage guarantees for the next-period distribution.

The remainder of the paper is organized as follows. Section~\ref{sec:problem-setting} introduces our framework. Section~\ref{section:concentration-of-measure} develops new concentration bounds for weighted empirical distributions. Section~\ref{sec:optimise-weights} characterizes weight choices that optimally balance variance and drift and derives optimal tuning rules for the classical schemes of time windowing and exponential smoothing. Section~\ref{sec:num-results} reports numerical results, and Section~\ref{sec:conclusions} discusses possible extensions. All source code and results are available in the repository accompanying this work.%
\footnote{Link: \url{https://github.com/dominickeehan/dont-look-back-in-anger}.}%%%
%\footnote{Link (anon.\ ver.): \url{https://osf.io/s52zp/overview?view_only=d8e9e2fc8c48433692c94a68cc265c5a}.}%%%

\textbf{Notation.} $\lVert \blank \rVert$ denotes the standard Euclidean norm. $\mathfrak{P}(\Xi)$ denotes the set of all Borel probability measures on a set $\Xi$. Among these measures, the point-mass distribution $\pmProb_{\xi}$ assigns probability $1$ to the outcome $\xi \in \Xi$. For a distribution $\Prob\in\mathfrak{P} (\Xi)$, we write $\rxi\sim\Prob$ to express that a random vector $\rxi$ is distributed according to $\Prob$. Boldface symbols denote random vectors, and plain symbols denote their outcomes. $\weights_N$ denotes the probability $N$-simplex; that is, the set of non-negative $N$-tuples summing to $1$. Also, we write $[T]\defeq\{1,\dots,T\}$ and $x_+\defeq\max\{0,x\}$.

\section{Problem Setting}\label{sec:problem-setting}

We study data-driven decision problems where a decision maker observes $T$ historical samples~${\rxi_1 \sim \Prob_1}, \ldots, \rxi_T \sim \Prob_T$ of an uncertain parameter $\xi \in\Xi$ governed by unknown time-varying data-generating distributions $\Prob_1, \ldots, \Prob_T \in \mathfrak{P}(\Xi)$ supported on the (known) closed set~$\Xi\subseteq\reals^m$. At time $T + 1$, the decision maker selects a measurable, extended real--valued loss function~$\ell:\Xi\to\reals\cup \{\infty\}$ from an admissible class $\mathcal{L}$ to solve
\begin{equation}\label{eq:the-mother-of-all-problems}
    \minimize_{\ell \in \mathcal{L}} ~~ \sup_{\Qrob \in \mathcal{P}} \Expt_\Qrob\bigl[\ell(\rxi)\bigr],
\end{equation}
where $\mathcal{P} \subseteq \mathfrak{P}(\Xi)$ is the ambiguity set for the (unknown) next-period distribution $\Prob_{T+1}$. In the special case where losses are generated by a (known) cost function $f:\cX\times\Xi\to\reals\cup\{\infty\}$ depending on both a decision variable $x\in \cX$ and an uncertain variable $\xi\in\Xi$, the admissible class is $\mathcal{L}=\{\xi\mapsto f(x,\xi):x\in \cX\}$.

When $\Prob_1 = \cdots = \Prob_{T+1}$ and $\rxi_1, \ldots, \rxi_{T}$ are drawn independently (the \emph{stationary} setting), a common choice for the ambiguity set in \eqref{eq:the-mother-of-all-problems} is $\cP = \ball_p \bigl(\tfrac{1}{T}\tMeasureSum_{t=1}^{T} \pmProb_{\rxi_t};\epsilon\bigr)$; the $p$-Wasserstein ball of radius $\epsilon$ centered around the empirical distribution $\tfrac{1}{T}\tMeasureSum_{t=1}^{T} \pmProb_{\rxi_t}$. Here
\begin{equation*}
    \ball_p(\Prob;\epsilon) \defeq \Bigl\{\Qrob\in\mathfrak{P}(\Xi):
    W_p(\Prob,\Qrob) \leq \epsilon \Bigr\},
    \quad
    W_p(\Prob,\Qrob) \defeq
    \Bigl(\tinf{\gamma \in \Gamma(\Prob,\Qrob)}
    \Expt_\gamma\bigl[\|\rxi-\rzeta\|^p\bigr]\Bigr)^{1/p},
\end{equation*}
and $\Gamma(\Prob,\Qrob)$ is the set of couplings of $\Prob$ and $\Qrob$.
This choice is popular because it yields
tractable reformulations of problem~\eqref{eq:the-mother-of-all-problems} for broad classes of loss functions and, under mild conditions, satisfies a finite-sample guarantee of the form
\begin{equation*}
\Pr\biggl[\Prob_{T+1} \notin \ball_p\Bigl(\tfrac{1}{T}\tMeasureSum_{t=1}^{T} \pmProb_{\rxi_t};\epsilon\Bigr)\biggr]
    \leq c_1 \cdot\exp\Bigl(-c_2 T\epsilon^{m}\Bigr),
\end{equation*}
for positive constants $c_1$ and $c_2$ that depend only on $\Xi$ and $p$. Thus, the unknown true distribution is contained in $\ball_p \bigl(\tfrac{1}{T}\tMeasureSum_{t=1}^{T} \pmProb_{\rxi_t};\epsilon\bigr)$ with high, controllable probability.

We depart from the stationary setting by allowing $\Prob_t$ to vary over time, but we assume that consecutive changes are bounded in Wasserstein distance. In other words, we stipulate that $W_\infty(\Prob_t,\Prob_{t+1}) \le \rho$ for all $t \in [T]$ and some non-negative drift bound $\rho$, where
\begin{equation*}
    W_{\infty}(\Prob,\Qrob)
    \defeq \lim_{p\to\infty} W_p(\Prob,\Qrob)
    = \inf_{\gamma\in\Gamma(\Prob,\Qrob)}\esssup_{(\rxi,\rzeta)\sim\gamma}\|\bm{\xi}-\bm{\zeta}\|.
\end{equation*}
To focus on distributional shift rather than temporal dependence, we continue to assume that, given the distributions $\Prob_1,\ldots,\Prob_{T}$, the samples $\rxi_1, \ldots, \rxi_{T}$ are independent.
%For simplicity, we continue to assume that, given the distributions $\Prob_1,\ldots,\Prob_{T}$, the samples $\rxi_1, \ldots, \rxi_{T}$ are independent. %An alternative model would involve some knowledge of the degree of dependence between the samples, but this would complicate the analysis.
Throughout the paper, we assume that the drift bound $\rho$ is known. This assumption allows our concentration bounds to explicitly capture how $\rho$ influences the resulting probability bounds, the optimal weight choices, and the windowing and exponential smoothing schemes. At the same time, this assumption is not restrictive in practice: our results culminate in a family of optimal weighting schemes parameterized by $\rho$, and in applications one selects the value of $\rho$ that best fits the data via cross-validation.
We bound the distributional drift via $W_\infty$ because it allows us to control the worst-case pointwise displacement between consecutive distributions, and it implies that $W_p(\Prob_t,\Prob_{t+1})\le \rho$ for all $p\in[1,\infty]$. Distributional drift bounds expressed in terms of $W_p$ for $p < \infty$ would be interesting, but may be more challenging to work with.

In the presence of a positive drift $\rho$, the Wasserstein ambiguity set $\ball_p \bigl(\tfrac{1}{T}\tMeasureSum_{t=1}^{T} \pmProb_{\rxi_t};\epsilon\bigr)$ centered around the empirical distribution $\tfrac{1}{T}\tMeasureSum_{t=1}^{T} \pmProb_{\rxi_t}$ overweights earlier observations that are less representative of the next-period distribution $\Prob_{T+1}$. A seemingly natural alternative,
inspired by the literature on DRO with heterogeneous sources, is to take the intersection of balls around each single-sample ``data source'' estimate $\pmProb_{\rxi_t}$, that is, $\mathcal{P} = \bigcap_{t=1}^{T} \ball_p(\pmProb_{\rxi_t};\epsilon_t)$ for some radii $\epsilon_1,\ldots,\epsilon_T$, where $\epsilon_t$ is chosen to reflect that $\Prob_{T+1}$ is at a greater distance from earlier distributions. We next show that this construction may exhibit poor large-sample behavior.

\begin{example}[Asymptotic inconsistency of the intersection ambiguity set]\label{example:inconsistency-of-the-intersection-ambiguity-set}
For $\beta \in [0, 1)$, let~$\epsilon(\beta)$ denote the $(1-\beta)$-confidence $p$-Wasserstein radius (in the stationary setting) around an empirical distribution based on a single sample (as may be obtained from \parencite[Theorem~2]{fournier2015rate}). If the confidence radius is set from a concentration bound, then $\epsilon(\blank)$ is nonincreasing and strictly positive (see also, e.g., \parencite{fournier-2023-explicit-constants}). In our time-varying setting with drift bound $\rho$, choose~$\beta_1,\ldots,\beta_T\in[0,1)$ such that $\sum_{t=1}^T \beta_t = \beta \in [0, 1)$ and set 
    %\[\epsilon_t \defeq \epsilon(\beta_t) + (T-t+1)\rho \quad \text{for all}~t\in[T],\]
    $\epsilon_t \defeq \epsilon(\beta_t) + (T-t+1)\rho$ for all $t\in[T]$,
which achieves the guarantee \parencite[Proposition~4]{RychenerEtAl2024HeteroDRO}
    \begin{equation}\label{eq:intersection-guarantee}
        \Pr\Biggl[ \Prob_{T+1} 
        \notin \bigcap_{t=1}^{T} \ball_{p}(\pmProb_{\xi_t};\epsilon_t) \Biggr]
        \leq \sum_{t=1}^T \beta_t = \beta.
    \end{equation}
 We claim that this confidence guarantee is unsatisfactory when the drift bound is small and the sample size is large. In particular, consider the case where the data-generating process is actually stationary with $\Prob_1=\cdots=\Prob_{T+1}=\pmProb_{\xi}$ for some $\xi\in\Xi$. Then each $\rxi_t=\xi$, so all of the balls in \eqref{eq:intersection-guarantee} are centered around $\pmProb_{\xi}$ and satisfy
    \begin{equation*}
        \bigcap_{t=1}^{T} \ball_{p}(\pmProb_{\xi};\epsilon_t)
        = \ball_{p}\Bigl(\pmProb_{\xi};\tmin{t\in[T]}\bigl\{\epsilon_t\}\Bigr).
    \end{equation*}
Since 
%$(T-t+1)\rho\geq \rho$ and 
$\beta_t \leq \sum_{s=1}^T\beta_s=\beta$, the monotonicity and strict positivity of $\epsilon(\blank)$ imply that
    \begin{equation*}
        \min_{t\in[T]}\{\epsilon_t\} 
        = \min_{t\in[T]}\bigl\{\epsilon(\beta_t)+(T-t+1)\rho\bigr\}
        \geq \epsilon(\beta)>0,
    \end{equation*}
and thus
    \begin{equation*}
        \bigcap_{t=1}^{T} \ball_{p}(\pmProb_{\xi};\epsilon_t)
        \supseteq \ball_{p}\bigl(\pmProb_{\xi};{\epsilon(\beta)}\bigr)
        \neq \{\pmProb_{\xi}\}.
    \end{equation*}
Hence, with $\rho=0$, even as $T\to\infty$ the intersection ambiguity set cannot collapse to ${\pmProb_\xi}$. This is because each constituent ball is centered at a single-point empirical distribution, so the confidence radii do not decrease with $T$. \hfill \qedsymbol
\end{example}

The failure of the intersection-based ambiguity set to contract with $T$---even in the absence of drift---motivates us to center ambiguity sets at \emph{weighted} empirical distributions $\tMeasureSum_{t=1}^T w_t\pmProb_{\rxi_t}$. The next section develops finite-sample concentration bounds for such weighted empirical distributions under bounded drift, and Section~\ref{sec:optimise-weights} leverages these results to optimize the weights.

We now compare the intersection approach to the weighted approach we will develop in the following sections. Consider a one-dimensional example with $p=2$ and observations $\xi_1=1$, $\xi_2=-1$, $\xi_3=2$, and $\xi_4=3$. 
%Figure~\ref{figure:comparing-ambiguity-sets} shows which uniform distributions lie in the ambiguity sets generated by either approach. 
%The figure reports the ranges of means and standard deviations of the distributions. 
Figure~\ref{figure:comparing-ambiguity-sets} shows which uniform distributions lie in the ambiguity sets of each approach. (The figure reports the means and standard deviations.)
The weighted approach uses weights as per Proposition~\ref{theorem:optimal-weights-p>=1} below, and we %uniformly
increase the radii of the intersection approach so that the ambiguity sets are of similar size. In this example the intersection-based set is visibly more asymmetric. %than the weight-based set. 
The weight-based set has a symmetric shape, and it can be shown that this is in fact a circle, with a vertical axis at the weighted average of the observations. 
\begin{figure}[H]
    \centering
    \hspace{-0.75cm}\includegraphics[width=281pt]{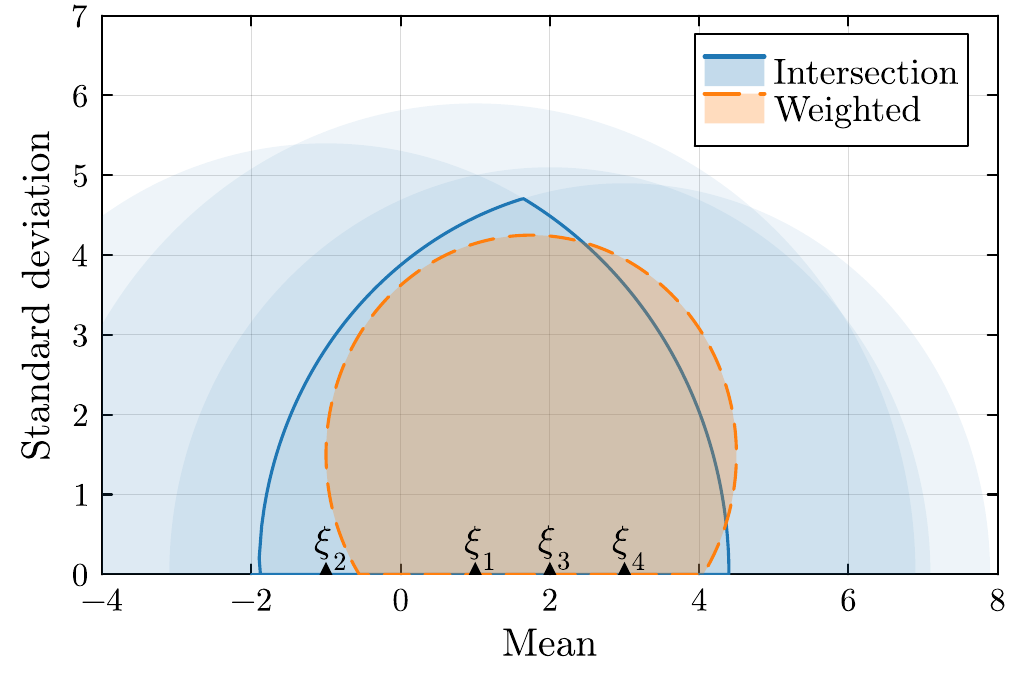}
    \caption{\textbf{Ambiguity Sets.} Means and standard deviations of uniform distributions under the weighted and intersection approaches for a one-dimensional example with $p=2$.}
    \label{figure:comparing-ambiguity-sets}
\end{figure}

\section{Concentration of Measure for Weighted Empiricals}\label{section:concentration-of-measure}
The by now classical results \parencite[Theorems~1 and~2]{fournier2015rate} show that when $\Xi\subseteq\reals^m$ is bounded and $\rxi_1, \ldots, \rxi_N \sim \Prob \in\mathfrak{P}(\Xi)$ are independent and identically distributed random samples, then 
there exist constants $c_0, c_1, c_2 > 0$  and $q\in(0,1/2]$ (depending only on $m$, $\mathrm{diam}(\Xi)$, and $p$) such that
\begin{equation}\label{eq:FG_Thm1}
\Expt \biggl[W_p^p\Bigl(\tfrac{1}{N} \tMeasureSum_{i=1}^N \pmProb_{\rxi_i},\Prob \Bigr)\biggr] \leq c_0 N^{-q} \quad  \text{for all}~N\in\mathbb{N},
\end{equation}
and
\begin{equation}\label{eq:FG}
    \Pr\biggl[ W_p\Bigl(\tfrac{1}{N} \tMeasureSum_{i=1}^N \pmProb_{\rxi_i},\Prob\Bigr) \geq \epsilon\biggr]
    \leq c_1\cdot \exp \Bigl(-c_2 N\epsilon^{p/q}\Bigr)
    \quad\text{for all}~N\in\mathbb{N}, \; \epsilon \in \reals_+.
\end{equation}
Here $q \defeq \min\{p/m,1/2\}$ when $p \neq m/2$. The boundary case $p = m/2$ entails a logarithmic correction; nevertheless, one can account for this while achieving \eqref{eq:FG_Thm1} by choosing a $q$ value slightly lower than $1/2$,  which corresponds to a weaker bound in \eqref{eq:FG_Thm1}. Henceforth, for a given $m$ and $p$, we take $q=\min\{p/m,1/2\}-\delta$ for some small $\delta \geq 0$ so that $q\in(0,1/2)$ achieves \eqref{eq:FG_Thm1}.

The goal of this section is to derive an analogue of~\eqref{eq:FG} when the samples $\rxi_1 \sim \Prob_1, \ldots, \rxi_T \sim \Prob_T$ are independent but not identically distributed and when the equally weighted empirical distribution $\tfrac{1}{T} \tMeasureSum_{t=1}^T \pmProb_{\rxi_t}$ is replaced by a weighted one, $\tMeasureSum_{t=1}^T w_t \pmProb_{\rxi_t}$ for $w \defeq (w_1, \ldots, w_T) \in \mathcal{W}_T$. Our bound will explicitly reflect the drift between successive distributions
%$\Prob_1,\ldots,\Prob_T$ and $\Prob_{T+1}$, 
and it will represent the weighting in the empirical distribution through the \emph{effective sample size}
\begin{equation*}
    \Neff(w_1,\ldots,w_N) \defeq \frac{1}{\sum_{i=1}^{N} w_i^2}.
\end{equation*}
This approximates 
the number of equally weighted samples that would yield the same variance as the weighted sample \parencite{elvira2022rethinking, Kong1992}. It ranges from $1$ to $N$, with its extremes attained at $\Neff(w)=1$ when all weight is placed on a single observation and $\Neff(w)=N$ for uniform weights $w_i = 1/N$. Throughout this section, our indices and weights match the setting: we use $(i,N,w_i,\mathcal W_N)$ for stationary results and $(t,T,w_t,\mathcal W_T)$ for nonstationary results, respectively.

We derive our result in two stages. In the first stage we analyze the stationary case where the samples are drawn independently from the same distribution. Here the concentration bounds of \textcite{fournier2015rate}  are proved for samples with equal weights. %, and we wish to generalize to our   setting. 
We begin from their expectation bound and in Lemma~\ref{lemma:weighted-empirical-Wasserstein-expectation-bound} extend this to arbitrarily weighted empirical distributions. Then Proposition~\ref{proposition:weighted-concentration-inequality} upgrades it to a high-probability concentration bound via a McDiarmid argument. In the second stage we turn to the nonstationary case where the samples are drawn from differing distributions. Lemma~\ref{lemma:shifted-samples-coupled-inequality} shows how the Wasserstein distance between the weighted empirical distribution formed from these nonstationary samples and the next-period distribution~$\Prob_{T+1}$ relates to the Wasserstein distance between a counterpart weighted empirical distribution formed from stationary samples and $\Prob_{T+1}$. Finally, Theorem~\ref{theorem:time-heterogeneous-concentration-inequality} combines Proposition~\ref{proposition:weighted-concentration-inequality} with Lemma~\ref{lemma:shifted-samples-coupled-inequality} to deliver our main concentration bound for nonstationary data.

\begin{lemma}\label{lemma:weighted-empirical-Wasserstein-expectation-bound}
For a distribution $\Prob\in\mathfrak{P}(\Xi)$ with bounded moments, let $q\in(0,1/2)$ achieve \eqref{eq:FG_Thm1}. Then, there exists a constant $c_0>0$ (depending only on $m$, $\Prob$'s moment bounds, $p$, and $q$) such~that
\[
\Expt_{\Prob^{N}}\biggl[W_p^p\Bigl(\tMeasureSum_{i=1}^N w_i\pmProb_{\rxi_i},\Prob\Bigr)\biggr]
\leq c_0\Neff(w)^{-q}
\quad\text{for all}~N\in\mathbb{N},\;w\in\weights_N.
\]
\end{lemma}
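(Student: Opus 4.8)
The plan is to reduce the weighted bound to the equal-weight bound \eqref{eq:FG_Thm1} by writing the weighted empirical measure as a convex combination of \emph{uniform} empirical measures supported on nested prefixes of the (weight-sorted) sample, and then to exploit the joint convexity of $W_p^p$. First I would relabel the samples so that the weights are nonincreasing, writing $v_1 \ge \cdots \ge v_N \ge 0$ for the sorted weights (with $v_{N+1} \defeq 0$) and $\eta_1, \dots, \eta_N$ for the correspondingly permuted samples. A layer-cake (Abel) decomposition $v_j = \sum_{k=j}^N (v_k - v_{k+1})$ then gives
\begin{equation*}
    \sum_{i=1}^N w_i \pmProb_{\rxi_i} = \sum_{j=1}^N v_j \pmProb_{\eta_j} = \sum_{k=1}^N \lambda_k \,\mu_k, \qquad \lambda_k \defeq k\,(v_k - v_{k+1}) \ge 0, \quad \mu_k \defeq \tfrac1k \sum_{j=1}^k \pmProb_{\eta_j},
\end{equation*}
where $\sum_{k=1}^N \lambda_k = \sum_{k=1}^N v_k = 1$ by a telescoping identity, so that $\{\lambda_k\}$ is a probability vector and each $\mu_k$ is the \emph{equally weighted} empirical measure of the $k$ highest-weighted samples.

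Next I would invoke the joint convexity of $(\Qrob,\Prob)\mapsto W_p^p(\Qrob,\Prob)$ — which follows by mixing optimal couplings — in its first argument to obtain $W_p^p(\sum_k \lambda_k \mu_k, \Prob) \le \sum_k \lambda_k W_p^p(\mu_k, \Prob)$. Taking expectations and using that the $v_j$ are deterministic (so each $\mu_k$ is the empirical measure of a \emph{fixed} sub-collection of $k$ i.i.d.\ draws from $\Prob$), the hypothesis \eqref{eq:FG_Thm1} applies to each $\mu_k$ and yields
\begin{equation*}
    \Expt_{\Prob^N}\Bigl[W_p^p\Bigl(\sum_{i=1}^N w_i \pmProb_{\rxi_i}, \Prob\Bigr)\Bigr] \le \sum_{k=1}^N \lambda_k\, \Expt_{\Prob^k}\bigl[W_p^p(\mu_k, \Prob)\bigr] \le c_0 \sum_{k=1}^N \lambda_k\, k^{-q}.
\end{equation*}
It remains to establish the purely deterministic inequality $\sum_{k=1}^N \lambda_k k^{-q} \le C_q \Neff(w)^{-q}$ for a constant $C_q$ depending only on $q$, recalling $\Neff(w)^{-q} = (\sum_i w_i^2)^q$.

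For this final step I would rewrite $\sum_k \lambda_k k^{-q} = \sum_k (v_k - v_{k+1})\, k^{1-q}$ and apply summation by parts to obtain $\sum_k v_k\, d_k$ with $d_k \defeq k^{1-q} - (k-1)^{1-q}$; since $1-q \in (1/2,1)$, a mean-value estimate gives $d_k \le C'_q\, k^{-q}$ for all $k$, reducing the task to bounding $\sum_k v_k\, k^{-q}$. Writing $S \defeq \sum_i w_i^2 = \Neff(w)^{-1} \in [1/N, 1]$ and splitting the sum at $J \defeq \lceil 1/S\rceil$, I would bound the head by Cauchy–Schwarz using $\sum_{k\le J} v_k^2 \le S$ together with $\sum_{k\le J} k^{-2q} \le J^{1-2q}/(1-2q)$ (valid since $2q<1$), and the tail by the elementary sorting bound $v_k \le 1/k$, so that $\sum_{k>J} v_k\, k^{-q} \le \sum_{k>J} k^{-1-q} \le J^{-q}/q$. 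With $J \asymp 1/S$ both pieces are $O_q(S^q)$, which is exactly the claimed $\Neff(w)^{-q}$ rate.

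I expect this last paragraph to be the crux. The convexity reduction is clean, but matching the \emph{exponent} $q$ (rather than $q/2$) in the deterministic inequality is delicate: the naive estimate $v_k \le \sqrt{S}$ on the head loses a square root and yields only the weaker rate $S^{q/2}$, so the key idea is to retain the full second-moment budget $\sum_{k\le J} v_k^2 \le S$ via Cauchy–Schwarz and to balance the head/tail split at the threshold $J \asymp \Neff(w)$. Checking that $C_q$ remains finite as $q \uparrow 1/2$ (through the factor $1/\sqrt{1-2q}$) and folding $c_0 C_q$ into the final constant are the remaining routine verifications.
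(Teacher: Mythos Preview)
Your proposal is correct and follows essentially the same route as the paper: sort the weights, Abel-decompose the weighted empirical into a convex combination of uniform empirical measures on prefixes, apply convexity of $W_p^p$ together with the Fournier--Guillin bound \eqref{eq:FG_Thm1}, sum by parts to reach $\sum_k v_k\,k^{-q}$, and split at $J\asymp \Neff(w)$ with Cauchy--Schwarz on the head. The only cosmetic difference is your tail bound via the sorting inequality $v_k\le 1/k$ (giving $\sum_{k>J}k^{-1-q}\le J^{-q}/q$), whereas the paper simply uses $k^{-q}\le(\lfloor\Neff\rfloor+1)^{-q}$ and $\sum_k w_k\le 1$; both are valid and yield the same $\Neff(w)^{-q}$ rate with constants depending on $q$.
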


\begin{proof}
Our proof proceeds in three steps. Step~\hyperref[step:1]{1} decomposes the Wasserstein distance from the statement of the lemma into a sum of Wasserstein distances %each of which uses
from unweighted empirical distributions. Step~\hyperref[step:2]{2} applies the bound \eqref{eq:FG_Thm1} for expected Wasserstein distances to these unweighted empirical distributions. Finally, in Step~\hyperref[step:3]{3} we split the resulting sum of bounds into two parts, and show that each part is dominated by the expectation bound from the statement of the lemma.\looseness=-1

\emph{Step~1\label{step:1}.} We reindex the weights so that $w_1 \ge \cdots \ge w_N$. This entails no loss of generality since reindexing does not change $\Neff(w)$. The weighted empirical distribution satisfies
\begin{align*}
\sum_{i=1}^{N}w_{i}\pmProb_{\xi_{i}} &= w_N\sum_{i=1}^{N}\pmProb_{\xi_{i}} + (w_{N-1}-w_N)\ntspace\ntspace\sum_{i=1}^{N-1}\pmProb_{\xi_{i}} + \cdots + (w_1-w_2)\pmProb_{\xi_{1}} \\
&= Nw_N\frac{1}{N}\sum_{i=1}^{N}\pmProb_{\xi_{i}} + (N-1)(w_{N-1}-w_N)\frac{1}{N-1}\ntspace\ntspace\sum_{i=1}^{N-1}\pmProb_{\xi_{i}} + \cdots + (w_1-w_{2})\pmProb_{\xi_{1}},
\end{align*}
where the last expression constitutes a convex combination of the distributions $\tfrac{1}{N}\tMeasureSum_{i=1}^{N}\pmProb_{\xi_{i}}$, $\tfrac{1}{N-1}\tMeasureSum_{i=1}^{N-1}\pmProb_{\xi_{i}}$, $\ldots\,$, $\pmProb_{\xi_{1}}$ with coefficients $Nw_N$, $(N-1)(w_{N-1}-w_N)$, $\ldots\,$, $(w_{1}-w_2)$. Thus, by the convexity of $\mu \mapsto W_p^p(\mu,\Prob)$ in its first argument (see, e.g., \parencite[Theorem~4.8]{Optimal-Transport:Villani}), we obtain
\begin{multline*}\label{equation:expected-Wasserstein-distance-decomposition}
W_p^p\Bigl(\tMeasureSum_{i=1}^{N}w_{i}\pmProb_{\xi_{i}},\Prob\Bigr)
\leq{} Nw_NW_p^p\Bigl(\tfrac{1}{N}\tMeasureSum_{i=1}^{N}\pmProb_{\xi_{i}},\Prob\Bigr) + (N-1)(w_{N-1}-w_N)W_p^p\Bigl(\tfrac{1}{N-1}\tMeasureSum_{i=1}^{N-1}\pmProb_{\xi_{i}},\Prob\Bigr)  \\
 + \cdots + (w_1-w_{2})W_p^p\Bigl(\pmProb_{\xi_{1}},\Prob\Bigr)\notag.
\end{multline*}

\emph{Step~2\label{step:2}.} We use the expectation bound \eqref{eq:FG_Thm1} of \parencite[Theorem~1]{fournier2015rate} for each Wasserstein distance in the previous expression. Thus there is a constant $c > 0$ and $q \in (0,1/2)$ (depending only on $m$, $\Prob$'s moment bounds, and $p$) such that
\begin{align*}
& \Expt_{\Prob^{N}} \biggl[W_p^p\Bigl(\tMeasureSum_{i=1}^{N}w_{i}\pmProb_{\rxi_{i}},\Prob\Bigr)\biggr] \\
\leq{} & Nw_N c N^{-q} + (N-1)(w_{N-1}-w_N)c(N-1)^{-q} + \cdots + (w_{1}-w_{2})c \\
={} & N^{1-q}w_N c + (N-1)^{1-q}(w_{N-1}-w_N)c + \cdots + (w_{1}-w_{2})c\\
={} & \Bigl(N^{1-q}-(N-1)^{1-q}\Bigr)w_N c + \Bigl((N-1)^{1-q}-(N-2)^{1-q}\Bigr)w_{N-1}c + \cdots + w_1c.
\end{align*}
Thus, we obtain that
\begin{equation}\label{equation:expected-Wasserstein-distance-decomposition-sqrt-sum}
\Expt_{\Prob^{N}} \biggl[W_p^p\Bigl(\tMeasureSum_{i=1}^{N}w_{i}\pmProb_{\rxi_{i}},\Prob\Bigr)\biggr] \leq c\sum_{i=1}^{N} \Bigl(i^{1-q}-(i-1)^{1-q}\Bigr)w_i \leq c\sum_{i=1}^{N} i^{-q}w_i,
\end{equation}
where the final inequality follows since $i^{1-q}-(i-1)^{1-q} \leq i^{-q}$ for all $i\in\nats$ when $q>0$. %(To see this, note that the inequality can be rearranged to $(i-1)^q \leq i^q$.)

\emph{Step~3\label{step:3}.} In \eqref{equation:expected-Wasserstein-distance-decomposition-sqrt-sum} we decompose the final sum $\sum_{i=1}^N i^{-q}w_i$ into $H + R$ with
\begin{equation*}
    H = \sum_{i=1}^{\lfloor\Neff(w)\rfloor} i^{-q}w_i  \quad \text{and} \quad R = \sum_{i=\lfloor \Neff(w) \rfloor+1}^{N} i^{-q}w_i.
\end{equation*}

First consider the ``head'' term $H$. By Cauchy–Schwarz and the definition of $\Neff(w)$,
\begin{equation*}
H \le \Bigl({\sum}_{i=1}^{\lfloor\Neff(w)\rfloor} i^{-2q}\Bigr)^{1/2}
       \Bigl({\sum}_{i=1}^{\lfloor\Neff(w)\rfloor} w_i^2\Bigr)^{1/2}
   \le \Bigl({\sum}_{i=1}^{\lfloor\Neff(w)\rfloor} i^{-2q}\Bigr)^{1/2}\Neff(w)^{-1/2}.
\end{equation*}
Since $x\mapsto x^{-2q}$ is decreasing, we have
\begin{equation*}
\sum_{i=1}^{\lfloor\Neff(w)\rfloor} i^{-2q}
\leq 1+\int_{1}^{\lfloor\Neff(w)\rfloor} x^{-2q}\,\drv x \leq \frac{{\lfloor\Neff(w)\rfloor}^{1-2q}}{1-2q},
\end{equation*}
and thus
\begin{align*}
H &\leq \frac{1}{\sqrt{1-2q}}{\lfloor\Neff(w)\rfloor}^{1/2-q}\Neff(w)^{-1/2} \leq \frac{1}{\sqrt{1-2q}}\Neff(w)^{-q},
\end{align*}
where the final inequality uses that $\lfloor\Neff(w)\rfloor \leq \Neff(w)$ and $1/2-q > 0$.

Next we consider the ``remainder'' term $R$. Since $i\mapsto i^{-q}$ is decreasing and the weights $w_i$ sum to $1$, we have
\begin{equation*}
R \leq \sum_{i=\lfloor\Neff(w)\rfloor+1}^N  \Bigl(\lfloor\Neff(w)\rfloor+1\Bigr)^{-q} w_i \leq \Bigl(\lfloor\Neff(w)\rfloor+1\Bigr)^{-q} \leq \Neff(w)^{-q},
\end{equation*}
where the final inequality holds since $\lfloor\Neff(w)\rfloor+1 \geq \Neff(w)$. Combining~\eqref{equation:expected-Wasserstein-distance-decomposition-sqrt-sum} with the bounds for $H$ and $R$ completes the proof with $c_0 = c\bigl(1+{1}/\ntspace\ntspace\sqrt{1-2q}\bigr)$.
\end{proof}

Lemma~\ref{lemma:weighted-empirical-Wasserstein-expectation-bound} 
generalizes \parencite[Theorem~1]{fournier2015rate} to the setting of weighted  empirical distributions. The convergence rates of both results coincide; the only difference is that their bound uses the actual sample size $N$, whereas ours uses the effective sample size $\Neff(w)$.

Under the additional assumption that $\Xi$ is bounded, we can use  %next leverage McDiarmid's inequality to upgrade 
the expectation bound of 
Lemma~\ref{lemma:weighted-empirical-Wasserstein-expectation-bound} to give a concentration bound. Note that the boundedness of $\Xi$ implies that every distribution $\Prob\in\mathfrak{P}(\Xi)$ has bounded moments.

\begin{proposition}[Concentration with effective sample size]\label{proposition:weighted-concentration-inequality}
Assume $\diam(\Xi)<\infty$. For a distribution $\Prob\in\mathfrak{P}(\Xi)$, let $q\in(0,1/2)$ be the exponent from Lemma~\ref{lemma:weighted-empirical-Wasserstein-expectation-bound}. Then for independent and identically distributed random samples $\rxi_1,\ldots,\rxi_N\sim\Prob$, there exist constants $c_1, c_2 > 0$ (depending only on~$m$,~$\diam(\Xi)$,~$p$, and~$q$) such that
\begin{multline}\label{equation:weighted-concentration-shifted-final}
%\Prob^{N} 
\Pr\biggl[W_p\Bigl(\tMeasureSum_{i=1}^{N}w_{i}\pmProb_{\rxi_{i}},\Prob\Bigr) \geq \epsilon \biggr] \leq \exp\biggl(-c_1\Neff(w)\cdot\Bigl(\epsilon^{p}-c_2\Neff(w)^{-q}\Bigr)_+^2\biggr)\\ \quad \text{for all}~N\in\mathbb{N},\; w \in \weights_N, \; \epsilon \in \reals_+.
\end{multline}
In particular, whenever $\epsilon \geq 2\bigl(c_2\Neff(w)^{-q}\bigr)^{1/p}$, we have
\begin{equation}\label{equation:weighted-concentration-clean-final}
\Pr\biggl[W_p\Bigl(\tMeasureSum_{i=1}^{N}w_{i}\pmProb_{\rxi_{i}},\Prob\Bigr) \geq \epsilon \biggr] \leq \exp\biggl(-\frac{c_1}{4}\Neff(w)\epsilon^{2p}\biggr).
\tag{\begin{NoHyper}\ref{equation:weighted-concentration-shifted-final}\end{NoHyper}$^\prime$} 
\end{equation}
\end{proposition}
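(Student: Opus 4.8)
The plan is to upgrade the expectation bound of Lemma~\ref{lemma:weighted-empirical-Wasserstein-expectation-bound} to a tail bound through a bounded-differences (McDiarmid) argument applied to the \emph{$p$-th power} of the Wasserstein distance. I would fix $N$ and $w\in\weights_N$ and set $Z\defeq W_p^p\bigl(\sum_{i=1}^N w_i\pmProb_{\rxi_i},\Prob\bigr)$, regarded as a function of the independent coordinates $\rxi_1,\ldots,\rxi_N\sim\Prob$. Lemma~\ref{lemma:weighted-empirical-Wasserstein-expectation-bound} already supplies $\Expt Z\le c_2\Neff(w)^{-q}$ after relabeling its constant as $c_2$, so the task reduces to concentrating $Z$ about its mean.

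The crux --- and the step I expect to be the main obstacle --- is the bounded-difference estimate, since it is precisely here that the effective sample size is forced to appear. Fixing an index $i$ and replacing $\rxi_i$ by an arbitrary $\xi_i'\in\Xi$ changes the center from $\mu=\sum_j w_j\pmProb_{\xi_j}$ to $\mu'=\mu-w_i\pmProb_{\xi_i}+w_i\pmProb_{\xi_i'}$. Taking an optimal coupling of $(\mu,\Prob)$, disintegrating it against its first marginal, and rerouting \emph{only} the mass $w_i$ emanating from the atom $\xi_i$ so that it emanates from $\xi_i'$ instead --- while keeping its conditional target law unchanged --- produces a feasible coupling of $(\mu',\Prob)$ whose cost exceeds the optimal cost by at most $w_i\diam(\Xi)^p$, because each transported increment $\|\blank\|^p$ lies in $[0,\diam(\Xi)^p]$. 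Setting $c_i\defeq w_i\diam(\Xi)^p$, this yields $\lvert Z(\ldots,\xi_i,\ldots)-Z(\ldots,\xi_i',\ldots)\rvert\le c_i$. The decisive feature is that working with $W_p^p$ rather than $W_p$ makes the per-coordinate difference \emph{linear} in $w_i$, so that $\sum_{i=1}^N c_i^2=\diam(\Xi)^{2p}\sum_{i=1}^N w_i^2=\diam(\Xi)^{2p}/\Neff(w)$; bounding differences of $W_p$ itself would instead give $c_i\propto w_i^{1/p}$, hence $\sum_{i=1}^N w_i^{2/p}$, and the clean $\Neff(w)$ scaling would be lost.

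With these bounded differences, McDiarmid's inequality yields, for every $t\ge0$,
\begin{equation*}
\Pr\bigl[Z-\Expt Z\ge t\bigr]\le\exp\Bigl(-\tfrac{2\,t^2}{\sum_{i=1}^N c_i^2}\Bigr)=\exp\Bigl(-\tfrac{2\Neff(w)}{\diam(\Xi)^{2p}}\,t^2\Bigr),
\end{equation*}
which identifies the constant $c_1=2/\diam(\Xi)^{2p}$ (depending only on $\diam(\Xi)$ and $p$, as required). To pass back to $W_p$, I would write $\Pr[W_p\ge\varepsilon]=\Pr[Z\ge\varepsilon^p]$ and take $t=\varepsilon^p-\Expt Z$. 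Whenever $\varepsilon^p\ge\Expt Z$ this choice is admissible, and since $\Expt Z\le c_2\Neff(w)^{-q}$ we have $t\ge\varepsilon^p-c_2\Neff(w)^{-q}$; as enlarging $t$ only sharpens the exponent, this delivers \eqref{equation:weighted-concentration-shifted-final} with the positive part $(\varepsilon^p-c_2\Neff(w)^{-q})_+$. In the remaining regime $\varepsilon^p<\Expt Z\le c_2\Neff(w)^{-q}$ the positive part vanishes, the right-hand side equals $1$, and the inequality holds trivially --- which is exactly what makes the positive part the correct object.

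The sharpened form \eqref{equation:weighted-concentration-clean-final} then follows by a one-line estimate: if $\varepsilon\ge 2\bigl(c_2\Neff(w)^{-q}\bigr)^{1/p}$, then $c_2\Neff(w)^{-q}\le 2^{-p}\varepsilon^p\le\tfrac12\varepsilon^p$ because $p\ge1$, so $\varepsilon^p-c_2\Neff(w)^{-q}\ge\tfrac12\varepsilon^p$ and the exponent is at most $-\tfrac{c_1}{4}\Neff(w)\varepsilon^{2p}$. Apart from the coupling-based bounded-difference computation, every remaining step is routine bookkeeping with the constants and the positive part.
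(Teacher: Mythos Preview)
Your proposal is correct and follows essentially the same approach as the paper: apply McDiarmid's inequality to $W_p^p$ (not $W_p$) using the coupling-rerouting argument to obtain bounded differences $w_i\cdot\diam(\Xi)^p$, combine with the expectation bound from Lemma~\ref{lemma:weighted-empirical-Wasserstein-expectation-bound}, and then specialize to the clean tail form. Your remark on why one must work with $W_p^p$ to obtain per-coordinate oscillations linear in $w_i$---and hence the $\Neff(w)$ scaling---is precisely the point, and your handling of the positive part and of the simplification~\eqref{equation:weighted-concentration-clean-final} matches the paper's.
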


\begin{proof} 
We prove the result by applying McDiarmid's inequality to the random variable $W_p^p\bigl(\tMeasureSum_{i=1}^{N}w_{i}\pmProb_{\rxi_{i}},\Prob\bigr)$. To this end, it suffices to show that $(\xi_1,\ldots,\xi_N) \mapsto W_p^p\bigl(\tMeasureSum_{i=1}^{N}w_{i}\pmProb_{\xi_{i}},\Prob\bigr)$ satisfies the bounded-differences condition, that is, to bound 
\begin{equation}\label{equation:mcD-to-bound}
\biggl|W_p^p\Bigl(\tMeasureSum_{i=1}^{N}w_i\pmProb_{\xi_i},\Prob\Bigr) - W_p^p\Bigl(\tMeasureSum_{i=1}^{N}w_i\pmProb_{\xi^{\prime}_i},\Prob\Bigr)\biggr|,
\end{equation}
for all $(\xi_1,\ldots,\xi_N),(\xi^{\prime}_1,\ldots,\xi^{\prime}_N)\in\Xi^{N}$ with $\xi_i=\xi^{\prime}_i$ for all $i \neq j$ and some $j \in [N]$.

Set $\mu^{\prime} \defeq \tMeasureSum_{i=1}^{N} w_i \pmProb_{\xi^{\prime}_i}$, and let $\gamma^{\prime}$ 
be an optimal coupling of $\mu^{\prime}$ and $\Prob$, thus achieving cost $W_p^p\bigl(\tMeasureSum_{i=1}^{N}w_i\pmProb_{\xi^{\prime}_i},\Prob\bigr)$.
Since $\mu^{\prime}$ is discrete, there exist conditional laws $\beta_i^{\prime}\in\mathfrak P(\Xi)$ such that
\[
\gamma^{\prime}(\{\xi^{\prime}_i\}\times \cB)=w_i\beta_i^{\prime}(\cB)\quad\text{for all measurable}~\cB\subseteq\Xi.
\]
Define a coupling $\gamma$ of $\mu\defeq\tMeasureSum_{i=1}^N w_i \pmProb_{\xi_i}$ and $\Prob$ by
\[
\gamma(\{\xi_i\}\times \cB) \defeq w_i\beta_i^{\prime}(\cB)\quad\text{for all measurable}~\cB\subseteq\Xi~\text{and all}~i\in[N].
\]
Since $\xi_i=\xi^{\prime}_i$ for all $i\neq j$, these slices coincide with those of $\gamma^{\prime}$; only the $j$-{th} slice is relocated from $\{\xi^{\prime}_j\}\times \cB$ to $\{\xi_j\}\times \cB$ while keeping the same conditional $\beta_j^{\prime}$. With this disintegration, the cost of the optimal coupling $\gamma^{\prime}$ is
\[
W_p^p\Bigl(\tMeasureSum_{i=1}^{N}w_i\pmProb_{\xi^{\prime}_i},\Prob\Bigr)
= \sum_{i=1}^N w_i\Expt_{\beta_i^{\prime}}\bigl[\|\xi^{\prime}_i-\rzeta\|^p\bigr].
\]
For the feasible coupling $\gamma$, we analogously obtain
\[
W_p^p\Bigl(\tMeasureSum_{i=1}^{N}w_i\pmProb_{\xi_i},\Prob\Bigr)
\le \sum_{i\neq j} w_i\Expt_{\beta_i^{\prime}}\bigl[\|\xi^{\prime}_i-\rzeta\|^p\bigr]
+ w_j\Expt_{\beta_j^{\prime}}\bigl[\|\xi_j-\rzeta\|^p\bigr].
\]
Subtracting the two expressions yields
\begin{align*}
W_p^p\Bigl(\tMeasureSum_{i=1}^{N}w_i\pmProb_{\xi_i},\Prob\Bigr)
- W_p^p\Bigl(\tMeasureSum_{i=1}^{N}w_i\pmProb_{\xi^{\prime}_i},\Prob\Bigr) &\le w_j\Bigl(\Expt_{\beta_j^{\prime}}\bigl[\|\xi_j-\rzeta\|^p\bigr]
- \Expt_{\beta_j^{\prime}}\bigl[\|\xi^{\prime}_j-\rzeta\|^p\bigr]\Bigr)\\
&\le w_j\diam(\Xi)^p,
\end{align*}
since $0\le\|\xi-\zeta\|^p\le \diam(\Xi)^p$ for all $\xi,\zeta\in\Xi$.  
A symmetric argument shows this bounds \eqref{equation:mcD-to-bound}.
%\[\biggl|W_p^p\Bigl(\tMeasureSum_{i=1}^{N}w_i\pmProb_{\xi_i},\Prob\Bigr) - W_p^p\Bigl(\tMeasureSum_{i=1}^{N}w_i\pmProb_{\xi^{\prime}_i},\Prob\Bigr)\biggr|\le w_j\diam(\Xi)^p.\]

Applying McDiarmid’s inequality with the bounded-differences constants $a_i \defeq w_i \diam(\Xi)^p$, so that $\sum_{i=1}^N a_i^2=\diam(\Xi)^{2p}\sum_{i=1}^N w_i^2$, we obtain
\begin{multline*}
\Pr\biggl[
W_p^p\Bigl(\tMeasureSum_{i=1}^{N}w_{i}\pmProb_{\rxi_{i}},\Prob\Bigr)
- \Expt_{\Prob^{N}} \Bigl[W_p^p\Bigl(\tMeasureSum_{i=1}^{N}w_{i}\pmProb_{\rxi_{i}},\Prob\Bigr)\Bigr]
\ge \epsilon \biggr]
\le \exp \Bigl(- c_1 \Neff(w) \epsilon^{2}\Bigr)\\
\quad \text{for all}~N\in\mathbb{N},\; w \in \weights_N, \; \epsilon \in \reals_+,
\end{multline*}
where $c_1 = 2 \diam(\Xi)^{-2p}$. Using the expectation bound of Lemma~\ref{lemma:weighted-empirical-Wasserstein-expectation-bound},
\begin{align*}
& \Pr\biggl[W_p\Bigl(\tMeasureSum_{i=1}^{N}w_{i}\pmProb_{\rxi_{i}},\Prob\Bigr) \ge \epsilon \biggr]
= \Pr\biggl[W_p^p\Bigl(\tMeasureSum_{i=1}^{N}w_{i}\pmProb_{\rxi_{i}},\Prob\Bigr) \ge \epsilon^p \biggr] \\
& \qquad \le \Pr \biggl[
W_p^p\Bigl(\tMeasureSum_{i=1}^{N}w_{i}\pmProb_{\rxi_{i}},\Prob\Bigr)
- \Expt_{\Prob^{N}} \Bigl[W_p^p\Bigl(\tMeasureSum_{i=1}^{N}w_{i}\pmProb_{\rxi_{i}},\Prob\Bigr)\Bigr]
\ge \epsilon^p - c_2 \Neff(w)^{-q} \biggr] \\
& \qquad \le \exp\biggl(- c_1 \Neff(w) \Bigl(\epsilon^p - c_2 \Neff(w)^{-q}\Bigr)_+^{2}\biggr)
\quad \text{for all}~N\in\mathbb{N},\; w \in \weights_N, \; \epsilon \in \reals_+,
\end{align*}
which is \eqref{equation:weighted-concentration-shifted-final}. Finally, if $\epsilon^{p}\ge 2c_2\Neff(w)^{-q}$, then
$\bigl(\epsilon^{p}-c_2\Neff(w)^{-q}\bigr)_{+}\ge \epsilon^{p}/2$, yielding~\eqref{equation:weighted-concentration-clean-final}.
\end{proof}

Proposition~\ref{proposition:weighted-concentration-inequality} shows that the weighted empirical distribution concentrates around its mean at an exponential rate governed by the effective sample size $\Neff(w)$. The simplified bound \eqref{equation:weighted-concentration-clean-final} highlights that once $\epsilon$ exceeds a constant multiple of $\Neff(w)^{-q/p}$, the tail probability decays at least as fast as $\exp\bigl(-\frac{c_1}{4}\Neff(w)\epsilon^{2p}\bigr)$.

For $p>m/2$, the bound of Proposition~\ref{proposition:weighted-concentration-inequality} matches that of \parencite[Theorem~2]{fournier2015rate}, up to replacing the actual sample size $N$ by the effective sample size $\Neff(w)$. By contrast, for $p<m/2$ their result yields tighter tails of order $\exp(-c_1 N\epsilon^{m})$, whereas our bound has the form $\exp(-c_1 \Neff(w)\epsilon^{2p})$ for larger values of $\epsilon$. This discrepancy appears unavoidable when relying on McDiarmid’s inequality to move from equal to arbitrary weights.

We now turn to the nonstationary case, where the samples $\rxi_1, \ldots, \rxi_T$ are drawn from differing distributions $\Prob_1, \ldots, \Prob_T$. As discussed in Section~\ref{sec:problem-setting}, we assume that changes between consecutive distributions are bounded in Wasserstein distance. We will make use of the following definition of \emph{normalized weighted cumulative drift}, 
\[
D_p(w) \defeq \biggl(\ttspace\sum_{t=1}^T w_t(T-t+1)^p\biggr)^{1/p},
\]
which 
is the weighted $L_p$ norm of the look-back index $T-t+1$. It has a minimal value of $1$ if all of the weight is placed on the most recent observation and a maximal value of $T$ if all of the weight is placed on the oldest observation.

\begin{lemma}\label{lemma:shifted-samples-coupled-inequality}
Let $\Prob_1, \ldots, \Prob_{T}, \Prob_{T+1}\in\mathfrak{P}(\Xi)$ satisfy $W_{\infty}(\Prob_t,\Prob_{t+1})\le\rho$ for all $t\in[T]$. Then for any weights $w \in \weights_T$ and independent random samples
$\rxi_t \sim \Prob_t$, $\rzeta_t \sim \Prob_{T+1}$, $t\in[T]$, there exists a coupling of the product distributions $\prod_{t=1}^T \Prob_t$ and $\prod_{t=1}^T \Prob_{T+1}$ under which
\[
W_p\Bigl(\tMeasureSum_{t=1}^T w_t\pmProb_{\rxi_t},\Prob_{T+1}\Bigr)
\le
W_p\Bigl(\tMeasureSum_{t=1}^T w_t\pmProb_{\rzeta_t},\Prob_{T+1}\Bigr)
+ D_p(w)\rho
\quad\text{a.s.}
\]
\end{lemma}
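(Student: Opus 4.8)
The plan is to couple the nonstationary samples $\rxi_t\sim\Prob_t$ to the stationary samples $\rzeta_t\sim\Prob_{T+1}$ one index at a time so that each matched pair is close in norm, and then to transfer this pointwise closeness into a bound on the Wasserstein distance between the two weighted empiricals via the triangle inequality. The coupling is built so that the displacement of the $t$\textsuperscript{th} pair reflects exactly the look-back index $T-t+1$ appearing in $D_p(w)$.

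First I would chain the drift hypothesis through the triangle inequality for $W_\infty$ (which is a genuine metric on $\mathfrak{P}(\Xi)$) to obtain $W_\infty(\Prob_t,\Prob_{T+1})\le\sum_{s=t}^T W_\infty(\Prob_s,\Prob_{s+1})\le (T-t+1)\rho$ for every $t\in[T]$; here the index $T-t+1$ counts precisely the number of one-step shifts separating $\Prob_t$ from $\Prob_{T+1}$. Since $W_\infty$-optimal couplings are attained on the Polish space $\Xi$, for each $t$ there is a coupling $\gamma_t$ of $\Prob_t$ and $\Prob_{T+1}$ with $\|\rxi_t-\rzeta_t\|\le (T-t+1)\rho$ almost surely. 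Taking the product $\gamma\defeq\prod_{t=1}^T\gamma_t$ yields a coupling of $\prod_{t=1}^T\Prob_t$ and $\prod_{t=1}^T\Prob_{T+1}$, which preserves both the marginals and the across-$t$ independence, and under which the bounds $\|\rxi_t-\rzeta_t\|\le (T-t+1)\rho$ hold a.s.\ simultaneously for all $t$.

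Next I would apply the triangle inequality for $W_p$ to split
\[
W_p\Bigl(\tMeasureSum_{t=1}^T w_t\pmProb_{\rxi_t},\Prob_{T+1}\Bigr)
\le W_p\Bigl(\tMeasureSum_{t=1}^T w_t\pmProb_{\rxi_t},\tMeasureSum_{t=1}^T w_t\pmProb_{\rzeta_t}\Bigr)
+ W_p\Bigl(\tMeasureSum_{t=1}^T w_t\pmProb_{\rzeta_t},\Prob_{T+1}\Bigr),
\]
and then bound the first term using the \emph{synchronous} coupling that transports the mass $w_t$ at $\rxi_t$ to the mass $w_t$ at $\rzeta_t$. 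This is a feasible (though generally suboptimal) coupling of the two weighted empiricals, so
\[
W_p^p\Bigl(\tMeasureSum_{t=1}^T w_t\pmProb_{\rxi_t},\tMeasureSum_{t=1}^T w_t\pmProb_{\rzeta_t}\Bigr)
\le \sum_{t=1}^T w_t\|\rxi_t-\rzeta_t\|^p
\le \rho^p\sum_{t=1}^T w_t(T-t+1)^p
= \bigl(D_p(w)\rho\bigr)^p
\]
almost surely under $\gamma$, where the middle inequality substitutes the pointwise displacement bounds. Taking $p$\textsuperscript{th} roots and recombining with the split above gives the claimed a.s.\ inequality.

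The main obstacle is the passage from the $W_\infty$ distance bound to a coupling realizing the pointwise displacement \emph{almost surely}: because $W_\infty$ is an infimum of essential suprema, the clean a.s.\ statement hinges on the existence of $W_\infty$-optimal (or at least near-optimal) transport plans. I would either invoke attainment of $W_\infty$-optimal couplings on $\Xi$ directly, or, to avoid citing attainment, construct each $\gamma_t$ explicitly by gluing the consecutive plans $\pi_s$ of $(\Prob_s,\Prob_{s+1})$ through the gluing lemma and summing the one-step displacements, absorbing any $\epsilon$-slack through a limiting argument. Everything else, namely the two triangle inequalities and the synchronous-coupling estimate, is routine.
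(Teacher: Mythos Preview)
Your proposal is correct and follows essentially the same route as the paper: chain the $W_\infty$ drift bounds via the triangle inequality, invoke attainment of $W_\infty$-optimal couplings to obtain per-index couplings $\gamma_t$ with $\|\rxi_t-\rzeta_t\|\le(T-t+1)\rho$ a.s., take their product, and combine the $W_p$ triangle inequality with the synchronous-coupling estimate on the two weighted empiricals. The paper cites \parencite{GivensShortt1984} for attainment of the $W_\infty$ infimum, which resolves the obstacle you flagged.
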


\begin{proof}
By the triangle inequality for Wasserstein distances (see, e.g., \parencite[page~94]{Optimal-Transport:Villani}), we have
\begin{align}\label{equation:distribution-shifted-triangle-inequality}
W_p\Bigl(\tMeasureSum_{t=1}^T w_t \pmProb_{\xi_t},\Prob_{T+1}\Bigr) 
&\leq  W_p\Bigl(\tMeasureSum_{t=1}^T w_t \pmProb_{\zeta_t},\Prob_{T+1}\Bigr) 
   + W_p\Bigl(\tMeasureSum_{t=1}^T w_t \pmProb_{\xi_t},\tMeasureSum_{t=1}^T w_t \pmProb_{\zeta_t}\Bigr) \notag\\ 
&\leq W_p\Bigl(\tMeasureSum_{t=1}^T w_t \pmProb_{\zeta_t},\Prob_{T+1}\Bigr) 
   + \Bigl(\tMeasureSum_{t=1}^T w_t \lVert\xi_t - \zeta_t\rVert^p\Bigr)^{1/p}.
\end{align}
The triangle inequality also implies
\[
W_\infty(\Prob_t,\Prob_{T+1})
\le W_\infty(\Prob_t,\Prob_{t+1})+\cdots+W_\infty(\Prob_T,\Prob_{T+1})
\le (T-t+1)\rho \quad \text{for all}~t\in[T].
\]
Since the infimum in the definition of $W_\infty$ is attained (see \parencite[Proposition~1]{GivensShortt1984}), this implies for each $t\in[T]$ that there exists a coupling $\gamma_t\in\Gamma(\Prob_t,\Prob_{T+1})$ with
%\[\|\rxi_t-\rzeta_t\| \le (T-t+1)\rho \quad \gamma_t\text{-a.s.}\]
$\|\rxi_t-\rzeta_t\| \le (T-t+1)\rho$, $\gamma_t\text{-a.s.}$
The product of these couplings yields a joint coupling 
$\Upsilon\in\Gamma\bigl(\prod_{t=1}^T \Prob_t,\prod_{t=1}^T \Prob_{T+1}\bigr)$ under which
\[
\sum_{t=1}^T w_t \|\rxi_t-\rzeta_t\|^p
\le \sum_{t=1}^T w_t (T-t+1)^p \rho^p = D_p^p(w) \rho^p
\quad \Upsilon\text{-a.s.}
\]
Combining this with \eqref{equation:distribution-shifted-triangle-inequality} proves the result.
\end{proof}

Lemma~\ref{lemma:shifted-samples-coupled-inequality} couples the heterogeneous samples $\rxi_1, \ldots, \rxi_T$ to an independent and identically distributed proxy $\rzeta_1, \ldots, \rzeta_T$  and shows that distances to the weighted empirical distribution differ from those to the proxy by at most the addition of the weighted cumulative drift $D_p(w)\rho$. The additive term $D_p(w)\rho$ is essentially sharp: one can readily construct simple sequences of distributions for which the inequality in Lemma~\ref{lemma:shifted-samples-coupled-inequality} is tight.

We now combine the stationary concentration bound from Proposition~\ref{proposition:weighted-concentration-inequality} with the drift bound of Lemma~\ref{lemma:shifted-samples-coupled-inequality} to study the behavior of weighted empirical distributions that are generated by nonstationary distributions.

\begin{theorem}[Concentration for distributions with drift]
\label{theorem:time-heterogeneous-concentration-inequality}
Assume $\diam(\Xi)<\infty$. Let the distributions~$\Prob_1, \ldots, \Prob_{T}, \Prob_{T+1} \in \mathfrak{P} (\Xi)$ satisfy $W_{\infty}(\Prob_{t},\Prob_{t+1}) \leq \rho$ for all $t \in [T]$, and let $q$ be the exponent from Lemma~\ref{lemma:weighted-empirical-Wasserstein-expectation-bound} for $\Prob_{T+1}$. Then for independent random samples $\rxi_t\sim\Prob_t$, $t\in[T]$, there exist constants $c_1,c_2>0$ (depending only on~$m$,~$\diam(\Xi)$,~$p$, and~$q$) such that
\begin{multline}\label{equation:time-heterogeneous-concentration-inequality}
    %\prod_{t=1}^T \Prob_t 
    \Pr\biggl[ W_p\Bigl(\tMeasureSum_{t=1}^T w_t \pmProb_{\rxi_t},\Prob_{T+1}\Bigr) \geq \epsilon\biggr]
        \leq \exp\biggl(-c_1\Teff(w)\cdot\Bigl(\bigl(\epsilon-D_p(w)\rho\bigr)_+^p - c_2\Teff(w)^{-q} \Bigr)_+^{2}\biggr)\\
        \quad\text{for all}~T\in\mathbb{N},\; w \in \weights_T, \; \epsilon \in \reals_+.
\end{multline}
In particular, whenever $\epsilon \geq D_p(w)\rho + 2 (c_2\Teff(w)^{-q})^{1/p}$, we have
\begin{equation}\label{equation:time-heterogeneous-concentration-inequality-clean}
    %\prod_{t=1}^T \Prob_t 
    \Pr\biggl[ W_p\Bigl(\tMeasureSum_{t=1}^T w_t \pmProb_{\rxi_t},\Prob_{T+1}\Bigr) \geq \epsilon\biggr]
        \leq \exp\biggl(-\frac{c_1}{4}\Teff(w)\cdot\Bigl(\epsilon-D_p(w)\rho\Bigr)^{2p}_+\biggr).
    \tag{\begin{NoHyper}\ref{equation:time-heterogeneous-concentration-inequality}\end{NoHyper}$^{\prime}$}
\end{equation}
\end{theorem}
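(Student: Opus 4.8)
The plan is to deduce the nonstationary bound \eqref{equation:time-heterogeneous-concentration-inequality} from the stationary concentration inequality of Proposition~\ref{proposition:weighted-concentration-inequality} by transporting the problem onto an i.i.d.\ proxy via Lemma~\ref{lemma:shifted-samples-coupled-inequality}. First I would invoke Lemma~\ref{lemma:shifted-samples-coupled-inequality} to obtain a coupling $\Upsilon$ of $\prod_{t=1}^T \Prob_t$ and $\prod_{t=1}^T \Prob_{T+1}$, carrying the heterogeneous samples $\rxi_t \sim \Prob_t$ alongside a proxy $\rzeta_t \sim \Prob_{T+1}$, under which, almost surely,
\[
W_p\Bigl(\tMeasureSum_{t=1}^T w_t \pmProb_{\rxi_t}, \Prob_{T+1}\Bigr)
\le W_p\Bigl(\tMeasureSum_{t=1}^T w_t \pmProb_{\rzeta_t}, \Prob_{T+1}\Bigr) + D_p(w)\rho.
\]
The crucial point is that the probability on the left-hand side of \eqref{equation:time-heterogeneous-concentration-inequality} depends only on the joint law of $(\rxi_1,\dots,\rxi_T)$, which is $\prod_t \Prob_t$ regardless of the coupling, so I may evaluate it under $\Upsilon$. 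Because $\Upsilon$ is the product of the couplings $\gamma_t \in \Gamma(\Prob_t, \Prob_{T+1})$ built in the proof of Lemma~\ref{lemma:shifted-samples-coupled-inequality}, the proxy variables $\rzeta_1,\dots,\rzeta_T$ are independent and each distributed as $\Prob_{T+1}$, which is exactly the hypothesis of Proposition~\ref{proposition:weighted-concentration-inequality}.

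Next I would combine the almost-sure inequality with monotonicity of probability: the event $\{W_p(\tMeasureSum_t w_t \pmProb_{\rxi_t}, \Prob_{T+1}) \ge \varepsilon\}$ is contained, up to a null set, in $\{W_p(\tMeasureSum_t w_t \pmProb_{\rzeta_t}, \Prob_{T+1}) \ge \varepsilon - D_p(w)\rho\}$, so the target probability is at most the probability that the i.i.d.\ proxy empirical exceeds the threshold $(\varepsilon - D_p(w)\rho)_+$. Applying the stationary bound \eqref{equation:weighted-concentration-shifted-final} of Proposition~\ref{proposition:weighted-concentration-inequality} at this threshold, with $\Neff(w) = \Teff(w)$ and $\bigl((\varepsilon - D_p(w)\rho)_+\bigr)^p = (\varepsilon - D_p(w)\rho)_+^p$, reproduces \eqref{equation:time-heterogeneous-concentration-inequality} verbatim. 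The outer positive part absorbs the case $\varepsilon < D_p(w)\rho$, where the proxy threshold is $0$ and both the hypothesis and the claimed bound collapse to the trivial statement with right-hand side $1$.

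For the simplified inequality \eqref{equation:time-heterogeneous-concentration-inequality-clean}, I would observe that its hypothesis $\varepsilon \ge D_p(w)\rho + 2(c_2 \Teff(w)^{-q})^{1/p}$ is precisely the condition $(\varepsilon - D_p(w)\rho) \ge 2(c_2 \Teff(w)^{-q})^{1/p}$ under which the clean form \eqref{equation:weighted-concentration-clean-final} of Proposition~\ref{proposition:weighted-concentration-inequality} applies to the proxy at threshold $\varepsilon - D_p(w)\rho$; substituting this threshold and rewriting $(\varepsilon - D_p(w)\rho)^{2p} = (\varepsilon - D_p(w)\rho)_+^{2p}$ yields \eqref{equation:time-heterogeneous-concentration-inequality-clean} at once.

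I expect no substantive obstacle: the argument is a composition of two results already in hand, and the only care needed is measure-theoretic bookkeeping. Specifically, one must justify that evaluating the target probability under the coupling $\Upsilon$ does not change its value, and confirm that the proxy $\rzeta_1,\dots,\rzeta_T$ is genuinely i.i.d.\ $\Prob_{T+1}$ under $\Upsilon$, so that Proposition~\ref{proposition:weighted-concentration-inequality} applies unaltered. The remaining manipulations -- the nested positive parts and the identification of the effective sample size -- are routine.
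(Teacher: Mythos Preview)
Your proposal is correct and follows essentially the same approach as the paper: couple via Lemma~\ref{lemma:shifted-samples-coupled-inequality}, pass to the i.i.d.\ proxy using that marginals are preserved under the coupling, and apply Proposition~\ref{proposition:weighted-concentration-inequality} at the shifted threshold $(\varepsilon - D_p(w)\rho)_+$. The only cosmetic difference is that the paper obtains \eqref{equation:time-heterogeneous-concentration-inequality-clean} by simplifying \eqref{equation:time-heterogeneous-concentration-inequality} algebraically rather than re-invoking \eqref{equation:weighted-concentration-clean-final}, but the two routes are equivalent.
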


\begin{proof} 
For independent random samples $\rxi_t \sim \Prob_t$, $\rzeta_t \sim \Prob_{T+1}$, $t\in[T]$, Lemma~\ref{lemma:shifted-samples-coupled-inequality} provides a coupling $\Upsilon$ of the two $T$-product distributions under which
\[
    W_p\Bigl(\tMeasureSum_{t=1}^T w_t \pmProb_{\rxi_t},\Prob_{T+1}\Bigr) \leq W_p\Bigl(\tMeasureSum_{t=1}^T w_t \pmProb_{\rzeta_t},\Prob_{T+1}\Bigr) + D_p(w)\rho
    \quad\text{a.s.}
\]
Hence,
\begin{align*}
& \Pr\biggl[W_p\Bigl(\tMeasureSum_{t=1}^T w_t\pmProb_{\rxi_t},\Prob_{T+1}\Bigr) \ge \epsilon\biggr]
= \Upsilon\biggl[W_p\Bigl(\tMeasureSum_{t=1}^T w_t\pmProb_{\rxi_t},\Prob_{T+1}\Bigr) \ge \epsilon\biggr]\\
& \qquad \le \Upsilon\biggl[W_p\Bigl(\tMeasureSum_{t=1}^T w_t\pmProb_{\rzeta_t},\Prob_{T+1}\Bigr) \ge \bigl(\epsilon - D_p(w)\rho\bigr)_+\biggr]\\
& \qquad = \Pr\biggl[W_p\Bigl(\tMeasureSum_{t=1}^T w_t\pmProb_{\rzeta_t},\Prob_{T+1}\Bigr) \ge \bigl(\epsilon - D_p(w)\rho\bigr)_+\biggr] \quad\text{for all}~T\in\mathbb{N},\; w \in \weights_T, \; \epsilon \in \reals_+.
\end{align*}
Applying Proposition~\ref{proposition:weighted-concentration-inequality} to the right-hand side yields \eqref{equation:time-heterogeneous-concentration-inequality}. For the tail \eqref{equation:time-heterogeneous-concentration-inequality-clean}, note that if $\epsilon \ge D_p(w)\rho + 2\bigl(c_2\Neff(w)^{-q}\bigr)^{1/p}$, then
%\[\biggl(\Bigl(\epsilon-D_p(w)\rho\Bigr)^p - c_2\Teff(w)^{-q}\biggr)_+ \ge \frac{1}{2}\Bigl(\epsilon-D_p(w)\rho\Bigr)^p_+,\]
$\bigl((\epsilon-D_p(w)\rho)^p - c_2\Teff(w)^{-q}\bigr)_+ \ge \frac{1}{2}(\epsilon-D_p(w)\rho)^p_+.$ % We actually do this inline style in the previous proof so good to match these
%which gives the stated bound.
\end{proof}

Theorem~\ref{theorem:time-heterogeneous-concentration-inequality} shows that, after accounting for \(D_p(w)\rho\) and \(\Teff(w)^{-q/p}\), the weighted empirical distribution concentrates about \(\Prob_{T+1}\). When \(\rho=0\) and thus \(D_p(w)\rho=0\), the bound reduces to that of Proposition~\ref{proposition:weighted-concentration-inequality}. The comparison with \parencite[Theorem~2]{fournier2015rate} carries over mutatis mutandis from Proposition~\ref{proposition:weighted-concentration-inequality}: when $p > m/2$ our tails match theirs (up to replacing $N$ by $\Teff(w)$), whereas for $p < m/2$ their sharper exponent $m$ persists, while our bound has the exponent $2p$ and is effective only above $\Neff(w)^{-q/p}$.

\section{Optimal Observation Weightings}\label{sec:optimise-weights}
Theorem~\ref{theorem:time-heterogeneous-concentration-inequality} shows that, for a given ambiguity radius~$\epsilon$, different choices of observation weights~$w$ lead to different probabilistic guarantees that the next-period distribution~$\Prob_{T+1}$ lies within Wasserstein distance~$\epsilon$ of the weighted empirical distribution~\(\tMeasureSum_{t=1}^T w_t \pmProb_{\rxi_t}\). It is therefore natural to seek a weighting that achieves the strongest probabilistic guarantee at this ambiguity radius, thereby providing the least conservative ambiguity set.

For simplicity we focus on the tail regime of Theorem~\ref{theorem:time-heterogeneous-concentration-inequality} where \eqref{equation:time-heterogeneous-concentration-inequality-clean} applies. This corresponds to values of~$\epsilon$ that are not too small, and the bound simplifies to
\[
\Pr\biggl[
W_p\Bigl(\tMeasureSum_{t=1}^T w_t \pmProb_{\rxi_t},\Prob_{T+1}\Bigr) \ge \epsilon
\biggr]
\le
\exp\biggl(-\frac{c_1}{4} \Teff(w)\cdot\Bigl(\epsilon - D_p(w)\rho\Bigr)^{2p}_+\biggr).
\]
In this regime, minimizing the conservatism of the ambiguity set %---that is, minimizing the $\epsilon$ required to achieve a given confidence level---
is equivalent to maximizing the magnitude of the exponent on the right-hand side. Accordingly, we formulate the problem of selecting the optimal weighting $w$ as
\begin{alignat}{2}\label{problem:measure-concentration-optimization} 
    \maximize_{w\in\weights_T} ~~ \Teff(w)\cdot\Bigl(\epsilon - D_p(w)\rho\Bigr)_+^{2p}.
\end{alignat}
This objective presents a fundamental \emph{variance--drift} trade-off between two opposing forces when choosing the weights $w$: on the one hand, \emph{reducing sampling variance} by increasing the effective sample size~$\Teff(w)$; and on the other, \emph{reducing distributional drift} by decreasing the weighted cumulative drift~$D_p(w)\rho$. Putting more weight on recent samples reduces our exposure to drift but limits the number of effectively independent random samples, and vice versa. Note that dividing the objective in~\eqref{problem:measure-concentration-optimization} by~$\rho^{2p}$ implies that the optimal solutions depend solely on the ratio~$\epsilon/\rho$ rather than on the individual values of~$\epsilon$ and~$\rho$.

Problem~\eqref{problem:measure-concentration-optimization} is high-dimensional and nonconvex, and thus appears to be computationally challenging. Fortunately, it admits a  significant simplification: it always possesses an optimal solution that lies within a simple two-parameter family of weights which are monotone truncated polynomials in the look-back index.

\begin{theorem}[Structure of optimal weights]\label{theorem:optimal-weights-p>=1}
Problem~\eqref{problem:measure-concentration-optimization} admits an optimal solution
\[
w_{t} = \Bigl(a_1 - a_2 \bigl(T-t+1\bigr)^p\Bigr)_+, \quad t \in [T],
\]
for some constants \(a_1,a_2\ge 0\).
 \end{theorem}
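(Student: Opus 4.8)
The plan is to exploit that the objective in~\eqref{problem:measure-concentration-optimization} depends on $w$ only through the two scalar aggregates $\sum_{t=1}^T w_t^2$ (via $\Teff(w)$) and $\sum_{t=1}^T w_t(T-t+1)^p$ (via $D_p(w)^p$). Writing $a_t \defeq (T-t+1)^p$ and $M \defeq \sum_t w_t a_t = D_p(w)^p$, the objective reads $\bigl(\sum_t w_t^2\bigr)^{-1}\,(\varepsilon - \rho M^{1/p})_+^{2p}$. Existence of a maximizer is immediate from compactness of $\weights_T$ and continuity of the objective. Fix a maximizer $w^\star$ with positive optimal value (the fully degenerate case, where $\varepsilon \le \rho$ forces the second factor to vanish for every $w$ since $D_p(w)\ge 1$, is handled separately below) and let $M^\star \defeq \sum_t w_t^\star a_t$. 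Because the factor $(\varepsilon - \rho (M^\star)^{1/p})_+^{2p}>0$ is fixed once $M^\star$ is fixed, among all $w\in\weights_T$ with $\sum_t w_t a_t = M^\star$ the objective is maximized by any $w$ that minimizes $\sum_t w_t^2$. Replacing $w^\star$ by such a minimizer preserves optimality, so I reduce to solving the convex quadratic program $\min\{\sum_t w_t^2 : \sum_t w_t = 1,\ \sum_t w_t a_t = M^\star,\ w\ge 0\}$.

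Next I would invoke the KKT conditions for this program. Since all constraints are affine, the KKT conditions are necessary and sufficient without any constraint qualification. Stationarity gives $2w_t = \lambda + \mu a_t + \nu_t$ with multipliers $\nu_t\ge 0$ and complementary slackness $\nu_t w_t = 0$, whence $w_t = \tfrac12(\lambda + \mu a_t)_+$. Setting $c_1 \defeq \lambda/2$ and $c_2 \defeq -\mu/2$ recovers exactly the claimed truncated-polynomial form $w_t = \bigl(c_1 - c_2(T-t+1)^p\bigr)_+$. It remains to establish the signs $c_1,c_2\ge 0$; since $\sum_t w_t = 1>0$ forces some active coordinate with $c_1 = w_t + c_2 a_t$, the bound $c_1\ge 0$ follows once $c_2\ge 0$ is shown.

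The delicate step is therefore $c_2\ge 0$, i.e.\ that the optimal weights are nonincreasing in the look-back index. I would first argue that the optimal drift level does not exceed that of the uniform weighting $u=(1/T,\dots,1/T)$, whose drift is $M_0 \defeq \tfrac1T\sum_t a_t$: if $M^\star > M_0$, then $u$ has both a larger effective sample size ($\Teff(u)=T\ge\Teff(w^\star)$) and a larger drift factor (as $M_0<M^\star$ gives $(\varepsilon-\rho M_0^{1/p})_+\ge(\varepsilon-\rho (M^\star)^{1/p})_+$), so $u$ is itself optimal with $M=M_0$, and we may assume $M^\star\le M_0$. I would then rule out $c_2<0$: a negative $c_2$ makes $w_t=(c_1+|c_2|a_t)_+$ comonotone (nondecreasing) with $a_t$, so Chebyshev's sum inequality yields $M^\star=\sum_t w_t a_t\ge\tfrac1T\sum_t a_t = M_0$; combined with $M^\star\le M_0$ this forces equality, hence (the $a_t$ being distinct) the uniform weighting, for which $c_2=0$ — contradicting $c_2<0$. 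Thus $c_2\ge 0$, and $c_1\ge 0$ follows.

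The main obstacle I anticipate is exactly this sign determination: the reduction to a quadratic program and the KKT form are routine convex-analysis bookkeeping, but pinning down $c_2\ge 0$ requires both the monotonicity argument showing the optimal drift cannot exceed the uniform drift and the Chebyshev/rearrangement step linking comonotonicity of the weights to an inflated drift. Finally, in the degenerate regime where the second factor vanishes for every $w$, the objective is identically zero and every weight is optimal, so one simply exhibits the admissible member $w=u$ of the stated family with $c_1=1/T$ and $c_2=0$.
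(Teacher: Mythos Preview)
Your proposal is correct but follows a genuinely different route from the paper. The paper applies the KKT conditions directly to the original (non-convex) maximization~\eqref{problem:measure-concentration-optimization}: after verifying LICQ and differentiating the full composite objective, stationarity on the active coordinates yields the affine relation $w_s = c_1 - c_2(T-s+1)^p$ together with the \emph{explicit} formulas
\[
c_1=\frac{\lambda}{2\Teff(w)^2\bigl(\varepsilon-D_p(w)\rho\bigr)^{2p}},\qquad
c_2=\frac{\rho\,D_p(w)^{1-p}}{\Teff(w)\bigl(\varepsilon-D_p(w)\rho\bigr)}.
\]
From the second formula $c_2\ge 0$ is immediate (since $\varepsilon>D_p(w)\rho$ at any optimum with positive value), and $c_1\ge 0$ then follows from positivity of some weight. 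Your approach instead freezes the drift level $M^\star=D_p(w^\star)^p$ and reduces to the convex quadratic program $\min\{\sum_t w_t^2:\sum_t w_t=1,\ \sum_t w_t a_t=M^\star,\ w\ge 0\}$; the KKT system for this QP (with purely affine constraints, hence needing no constraint qualification and giving necessary-and-sufficient conditions) delivers the same truncated form, but the multiplier $\mu$ carries no a~priori sign, so you supply a separate monotonicity argument --- first $M^\star\le M_0$ by comparison with the uniform weighting, then Chebyshev's sum inequality to exclude $c_2<0$.

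The trade-off is clear: the paper pays the price of differentiating the messier composite objective and invoking LICQ for a non-convex problem, but gets the sign of $c_2$ for free from the multiplier formula. You pay nothing on the optimization side (the QP reduction is clean and the KKT step is standard), but must work harder for the sign via the rearrangement argument. Your treatment of the degenerate regime $\varepsilon\le\rho$ is also more explicit than the paper's one-line dismissal.
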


\begin{proof}
The positive-part operation in~\eqref{problem:measure-concentration-optimization} must be inactive at optimality; otherwise any feasible~$w$ would be optimal. We may thus assume \(\epsilon > D_p(w)\rho\) and equivalently maximize
\[
    \Teff(w)\Bigl(\epsilon - D_p(w)\rho\Bigr)^{2p}
\]
over \(w \in \weights_T = \bigl\{ w \in \mathbb{R}^T : \sum_{t=1}^{T} w_t = 1,\, w_t \ge 0,\, t \in [T] \bigr\}\).
Since the objective is continuous and the feasible region is compact, optimal solutions exist. 
Moreover, the Linear Independence Constraint Qualification holds at every feasible point: if \(\cA=\{t\in[T]: w_t=0\}\) is the active set, then $\cA \subsetneq [T]$ since at least one weight must be strictly positive, and thus the gradients of the active inequalities together with the equality-constraint gradient are linearly independent.
Consequently, the Karush--Kuhn--Tucker (KKT) conditions are necessary for local (and hence global) optimality. 
We claim that any KKT point %of the problem 
must exhibit the structure stated in the theorem.\looseness=-1

Introducing a multiplier~\(\lambda\) for the equality constraint and multipliers~\(\mu_t \ge 0\) for the inequalities in $\weights_T$, the Lagrangian reads
\[
    L(w,\lambda,\mu)
    = \Teff(w)\Bigl(\epsilon - D_p(w)\rho\Bigr)^{2p}
      + \lambda\Bigl(\tspace\sum_{t=1}^T w_t - 1\Bigr)
      + \sum_{t=1}^T \mu_t w_t.
\]
By the definitions of $\Teff(w)$ and $D_p(w)$, we have
\[
\frac{\partial \Teff(w)}{\partial w_s} = -2w_s\Teff(w)^2
\quad \text{and} \quad
\frac{\partial D_p(w)}{\partial w_s} = \frac{(T-s+1)^p}{p}D_p(w)^{1-p}.
\]
For strictly positive weights \(w_s > 0\), complementary slackness implies \(\mu_s = 0\), and hence the stationarity condition \(\frac{\partial}{\partial w_s} L(w,\lambda,\mu) = 0\) reduces to
\begin{equation*} 
-2w_s\Teff(w)^2\Bigl(\epsilon - D_p(w)\rho\Bigr)^{2p}
-2\rho\Teff(w)\Bigl(\epsilon - D_p(w)\rho\Bigr)^{2p-1}\bigl(T-s+1\bigr)^p D_p(w)^{1-p}
+\lambda = 0.
\end{equation*}
Dividing through by \(2\Teff(w)^2\bigl(\epsilon - D_p(w)\rho\bigr)^{2p-1}\) yields 
%the affine relation \[w_s = a_1 - a_2 \bigl(T-s+1\bigr)^p,\]
$w_s = a_1 - a_2 \bigl(T-s+1\bigr)^p$,
where
\begin{equation}\label{c1-and-c2}
a_1 = \frac{\lambda}{2\Teff(w)^2\bigl(\epsilon - D_p(w)\rho\bigr)^{2p}}
\quad \text{and} \quad
a_2 = \frac{\rho D_p(w)^{1-p}}{\Teff(w)\bigl(\epsilon - D_p(w)\rho\bigr)}.
\end{equation}
For indices where $w_s$ would be negative, the complementary slackness condition enforces \(w_s = 0\).  
The constants \(a_1,a_2 \ge 0\) are determined by the %normalization 
constraint \(\sum_{t=1}^T w_t = 1\) and %by 
the %truncation 
threshold beyond which the weights vanish.  
Hence, every locally optimal weighting takes the claimed~form.\looseness=-1
\end{proof}

Theorem~\ref{theorem:optimal-weights-p>=1} shows that the optimal weights decay polynomially when looking backwards in time, and that they are truncated to zero beyond a finite horizon. For small $p$ the term~$- a_2 (T - t + 1)^p$ appearing in the expression for %in
$w_t$ increases slowly with $t$, producing a gradual decay of the weights. For large $p$, this term grows rapidly, causing a sharp drop in $w_t$,
and to satisfy the %normalization 
constraint $\sum_{t=1}^T w_t = 1$, the value of $a_1$ is adjusted so that the weights are roughly constant over a contiguous block of recent observations before collapsing to zero beyond a finite %time 
horizon. %cutoff. 
Figure~\ref{weights-for-p=1,2,3,4,5} shows how these optimal weightings behave in a specific example. The fact that weight concentrates on the most recent observations as $p$ increases can % also
be explained by the geometry of the Wasserstein distances: the exponent~$p$ affects how severely large transportation distances are penalized. In determining the best variance--drift trade-off, larger values of $p$ 
%imply that the $W_p$ distance from the weighted empirical distribution to $\Prob_{T+1}$ becomes increasingly sensitive to the most drifted (oldest) samples. 
increase the sensitivity of the $W_p$ distance to the oldest (most drifted) samples.
Optimal solutions therefore concentrate weight on a recent subset of samples to avoid~heavy~penalties.%, sharply downweighting older data. 

\begin{figure}[H]
    \centering
    \hspace{-1.5cm}\includegraphics[width=286pt]{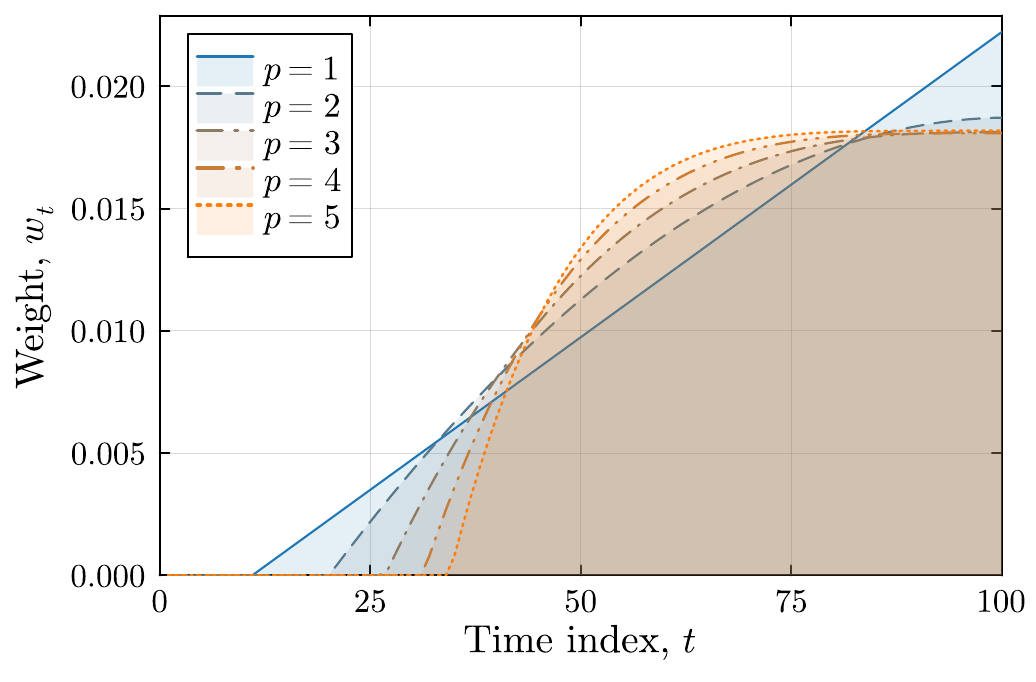} %[width=286pt]
    \caption{\textbf{Optimal Weightings for Different Orders $\bm{p}$.} Here $\epsilon/\rho = 90p$ and  $T=100$.}
    \label{weights-for-p=1,2,3,4,5}
\end{figure}

Theorem~\ref{theorem:optimal-weights-p>=1} also yields an efficient numerical solution method for \eqref{problem:measure-concentration-optimization}. Without loss of generality, we may restrict ourselves to weightings of the form in Theorem~\ref{theorem:optimal-weights-p>=1} that are supported on the $s\in[T]$ most recent observations. For each fixed $s$, the %normalization 
constraint~$\sum_{t=1}^T w_t = 1$ fixes $a_1=(1+a_2\sum_{t=1}^{s}t^{p})/s$, so the feasible set on that support collapses to a one-dimensional curve parameterized by $a_2$. Enforcing truncation exactly at $s$ (i.e., $w_{T-s+1}>0$ and $w_{T-s}=0$) 
yields a single admissible interval~$a_2 \in \bigl[ 1 / (s (s+1)^p - \sum_{t=1}^{s}t^p), 1 / (s^{p+1}-\sum_{t=1}^{s}t^p) \bigr)$. Thus, the global search for optimal weights reduces to a finite union of one-dimensional line searches.

The special case of %where 
$p = 1$ allows for a closed-form solution.
\begin{proposition}\label{proposition:optimal-weights-p=1}
Assume $\epsilon/\rho >1 $ and let $p=1$. Then problem~\eqref{problem:measure-concentration-optimization} has an optimal solution supported on the $s =\lfloor \epsilon/\rho \rfloor$ most recent observations given by
\begin{equation}\label{equation:proposition:optimal-weights-p=1}
w_{t}
=
\begin{cases}
\dfrac{2((\epsilon/\rho)+t-T-1)}{s(2\epsilon/\rho-s-1)} & \text{if}~t \geq T-s+1,\\
0 & \text{if}~t \leq T-s.
\end{cases}
\end{equation}
%If $s\le 0$, $w_T=1$ and $w_t=0$, $t<T$, is optimal. If $s>T$, the first case applies for all $k \in [T]$.
\end{proposition}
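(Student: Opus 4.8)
The plan is to combine the affine structure from Theorem~\ref{theorem:optimal-weights-p>=1} with an explicit two-level optimization: first over the slope of the affine weights for a fixed support size, and then over the support size itself. Throughout I would write $r \defeq \varepsilon/\rho$ and index observations by their look-back age $k = T-t+1 \in \{1,\dots,T\}$, so that $\Teff(w) = 1/\sum_k w_k^2$ and $D_1(w) = \sum_k k\, w_k$ is the mean look-back. As in the proof of Theorem~\ref{theorem:optimal-weights-p>=1}, the positive part in~\eqref{problem:measure-concentration-optimization} is inactive at optimality, so it suffices to maximize $(r - D_1(w))^2 \big/ \sum_k w_k^2$ over the simplex; and Theorem~\ref{theorem:optimal-weights-p>=1} with $p=1$ guarantees an optimal weighting that is affine in $k$ and supported on the $s$ most recent ages $\{1,\dots,s\}$ for some $s$.

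For a fixed support size $s$, I would parameterize the affine weights about their mean as $w_k = \tfrac1s + \beta\bigl(\tfrac{s+1}{2} - k\bigr)$ for $k \in \{1,\dots,s\}$, so that the normalization $\sum_k w_k = 1$ holds automatically, $\beta \ge 0$ encodes the downweighting of older data (matching $c_2\ge 0$ in Theorem~\ref{theorem:optimal-weights-p>=1}), and nonnegativity of the oldest retained weight restricts $\beta$ to $\bigl[0,\,2/(s(s-1))\bigr]$. A short computation gives $D_1(w) = \tfrac{s+1}{2} - \beta V$ and $\sum_k w_k^2 = \tfrac1s + \beta^2 V$ with $V \defeq \tfrac{s(s^2-1)}{12} = \sum_{k=1}^s\bigl(k-\tfrac{s+1}{2}\bigr)^2$, so the objective becomes the ratio $(a+\beta V)^2/(\tfrac1s+\beta^2 V)$ with $a \defeq r - \tfrac{s+1}{2}$. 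Its unique interior stationary point is $\beta^\star = 1/(as) = 2/\bigl(s(2r-s-1)\bigr)$, which I would verify is a maximizer (the ratio tends to $V$ as $\beta\to\pm\infty$, while the stationary value exceeds $V$) and which, substituted back, reproduces exactly the claimed weights $w_k = 2(r-k)/\bigl(s(2r-s-1)\bigr)$ with optimal value $a^2 s + V = \sum_{k=1}^s (r-k)^2$ (using the identity $\sum_{k}(r-k)^2 = sa^2 + V$).

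It then remains to optimize over the support size $s$. The per-support optimal value $\sum_{k=1}^s(r-k)^2$ is nondecreasing in $s$, which pushes $s$ as large as possible, but feasibility of $\beta^\star$ (nonnegativity of the weight at age $s$) requires precisely $s \le r$, so $s = \lfloor \varepsilon/\rho\rfloor$ is the largest admissible support. To rule out still larger supports I would use that the ratio objective in $\beta$ increases up to $\beta^\star$ and decreases thereafter: when $s > r$ the unconstrained optimum $\beta^\star$ is infeasible, so the constrained maximum is attained at the boundary $\beta = 2/(s(s-1))$, where the oldest weight vanishes and the configuration collapses to a feasible affine weighting on the shorter support $\{1,\dots,s-1\}$, whence the achievable value is dominated by that at support $s-1$. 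Combining the two monotonicities pins the optimum at $s = \lfloor \varepsilon/\rho\rfloor$, and translating $k = T-t+1$ back to $t$ yields~\eqref{equation:proposition:optimal-weights-p=1}.

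I expect the main obstacle to be the discrete optimization over the support size rather than the calculus in $\beta$: establishing that enlarging the support beyond $\lfloor \varepsilon/\rho\rfloor$ cannot help requires the boundary-collapse argument above (or an explicit comparison of $\sum_{k=1}^s(r-k)^2$ against the constrained boundary values), whereas the slope optimization is a routine single-variable stationarity computation. A secondary point to handle cleanly is the degenerate case $s=1$ and verifying $2r-s-1>0$ throughout, both of which follow from the standing assumption $\varepsilon/\rho>1$.
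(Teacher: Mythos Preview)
Your argument is essentially correct and takes a genuinely different route from the paper. The paper does not re-optimize at all: it reaches back into the proof of Theorem~\ref{theorem:optimal-weights-p>=1} and reuses the Lagrangian identity $c_2=\rho/\bigl(\Teff(w)(\varepsilon-D_1(w)\rho)\bigr)$ from the KKT conditions there, combines it with the algebraic relation $D_1(w)=c_1/c_2-(1/c_2)\sum_t w_t^2$ (valid for any affine weighting), and reads off $c_1/c_2=\varepsilon/\rho$ directly; then $s=\lfloor c_1/c_2\rfloor=\lfloor\varepsilon/\rho\rfloor$ and normalization fixes $c_2$. Your two-level optimization is more self-contained---it uses only the \emph{statement} of Theorem~\ref{theorem:optimal-weights-p>=1}, not its internals---and yields the pleasant closed form $\sum_{k=1}^{s}(r-k)^2$ for the per-support optimum, which makes the monotonicity in $s$ transparent. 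One wobble to repair: your claim that the ratio ``increases up to $\beta^\star$ and decreases thereafter'' breaks down once $a=r-(s+1)/2\le 0$ (i.e., $s\ge 2r-1$), since then $\beta^\star\le 0$ is not an interior maximizer and the ratio is \emph{not} unimodal on $[0,\bar\beta]$. The conclusion that the constrained maximum sits at the upper boundary nevertheless survives, because dropping the positive part requires you to retain the constraint $r-D_1(w)>0$, i.e., $\beta>-a/V$, and on $(-a/V,\bar\beta]$ the ratio is genuinely increasing; so your boundary-collapse induction goes through once this restriction is made explicit.
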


\begin{proof}
Theorem~\ref{theorem:optimal-weights-p>=1} implies any optimal solution to
\eqref{problem:measure-concentration-optimization} with $p=1$ has a truncated affine form, so that the weights are supported on a contiguous block of the most recent observations. Thus there exists an integer $s \in [T]$ such that
\[
w_t = a_1 - a_2 (T - t + 1) \quad \text{for } t \in \{T-s+1, \dots, T\}, 
\qquad
w_t = 0 \text{ otherwise}.
\]
Here the value of $w_t$ at $t = T - s$ would be negative if it were not truncated. Hence $a_1 - a_2 s \ge 0$ and $a_1 - a_2 (s+1) < 0$, i.e., $s = \lfloor a_1 / a_2 \rfloor$. Under this choice of $s$, %we can calculate that
\begin{equation}\label{D1(w)}
D_1(w) = \sum_{t=1}^T w_t (T - t + 1) = \sum_{t=T-s+1}^T w_t \frac{a_1 - w_t}{a_2} = \frac{a_1}{a_2} - \frac{1}{a_2} \sum_{t=1}^T w_t^2,
\end{equation}
where the second equality replaces $(T - t + 1)$ with $(a_1 - w_t)/a_2$ and the third uses $\sum_{t=1}^T w_t = 1$.

From~\eqref{c1-and-c2} in the proof of Theorem~\ref{theorem:optimal-weights-p>=1} we know that 
\[
a_2 = \frac{\rho}{\Teff(w)\bigl(\epsilon - D_1(w)\rho\bigr)}.
\]
Plugging in $\Teff(w) = \bigl(\sum_{t=1}^T w_t^2\bigr)^{-1}$ and the expression~\eqref{D1(w)} for $D_1(w)$, cross-multiplying yields %we obtain
\[
a_2 \Bigl(\epsilon - \frac{a_1 \rho}{a_2}\Bigr) + \rho \sum_{t=1}^T w_t^2 = \rho \sum_{t=1}^T w_t^2.
\]
We thus deduce that $a_1 / a_2 = \epsilon / \rho$, giving %the value 
$s = \lfloor a_1 / a_2 \rfloor = \lfloor \epsilon / \rho \rfloor$.

Since the weights on the last $s$ periods all satisfy $w_t = a_1 - a_2 (T - t + 1)$ and $\sum_{t=1}^T w_t = 1$, we have $s a_1 - \tfrac{1}{2} a_2 s (s + 1) = 1$. Then, substituting $a_1 = (\epsilon/\rho) a_2$ into this %normalization
equation, we deduce that $a_2 s \bigl((\epsilon/\rho) - (s+1)/2\bigr) = 1$, and hence
\[
a_2 = \frac{2\rho}{s(2\epsilon - \rho(s+1))}.
\]
Substituting this expression and $a_1 = (\epsilon/\rho) a_2$ into $w_t = a_1 - a_2 (T - t + 1)$ proves the claim.
\end{proof}

%Proposition~\ref{proposition:optimal-weights-p=1} shows that for $p=1$ the optimal weights increase linearly with recency. When $\epsilon/\rho$ is an integer then this point has zero weight: thus it is the last~$\lceil \epsilon/\rho \rceil -1$ periods that have a positive weight.
For $p=1$ Proposition~\ref{proposition:optimal-weights-p=1} shows the optimal weights increase linearly with recency. When $\epsilon/\rho$ is an integer, then this observation has zero weight. So the last~$\lceil \epsilon/\rho \rceil -1$ periods have positive weight.\looseness=-1

We now consider two classical schemes for assigning observation weights. The first is \emph{windowing}, in which equal weights are assigned to the most recent~$s$ observations and older data are discarded. A window of size~$s\in[T]$ therefore sets
\[
w_{T} = \cdots = w_{T-s+1} = \dfrac{1}{s}, \quad
w_{T-s} = \cdots = w_{1} = 0.
\]
The second is \emph{(simple exponential) smoothing}, in which weights decay geometrically backward in time at a fixed rate. Parametrized by a decay rate~$\alpha\in[0,1]$, this sets
\begin{equation*}\label{equation:smoothing-weights}
w_T=\alpha,\; w_{T-1}=\alpha(1-\alpha),\; w_{T-2}=\alpha(1-\alpha)^2,\;\ldots,
\end{equation*}
where we truncate to the available history length and scale the sum to~1.\footnote{For $\alpha=0$, we take the limiting form as $\alpha\to0$, corresponding to the equal weighting $w_T=\cdots=w_1=1/T$.}

%Both schemes 
Windowing and smoothing are simple and widely used in dealing with stochasticity, but they can also be treated in our distributionally ambiguous theoretical framework. The following result %(stated without proof)%%%
summarizes the optimal parameter choices for $p=1$ and shows their relationship with~$\epsilon/\rho$. Here, for any $x\in\reals$ and interval $[a,b]\subseteq\reals$, we write $x_{[a,b]}$ to denote the projection of $x$ onto $[a,b]$.\looseness=-1

\begin{proposition}\label{proposition:windowing-and-smoothing-near-optimal}
Let $p=1$. Then:
\begin{enumerate}[label={\normalfont(\roman*)}, ref={\roman*}]
\item \label{proposition:optimal-window-size} The optimal window size for problem~\eqref{problem:measure-concentration-optimization} is either
\[
s = 
\biggl\lfloor \frac{1}{3}\Bigl(2\epsilon/\rho-1\Bigr) \biggr\rfloor_{[1,T]}
\quad\text{or}\quad
s = \biggl\lceil \frac{1}{3}\Bigl(2\epsilon/\rho-1\Bigr) \biggr\rceil_{[1,T]}.
\]
\item \label{proposition:optimal-decay-rate} As $T\to\infty$, the optimal smoothing decay rate for problem~\eqref{problem:measure-concentration-optimization} is
\[
\alpha = \biggl(\frac{3}{\epsilon/\rho+1}\biggr)_{\bigl[(\rho/\epsilon)_{[0,1]},1\bigr]}.
\]
\end{enumerate}
\end{proposition}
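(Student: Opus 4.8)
The plan is to substitute each parametric weight family into the objective of~\eqref{problem:measure-concentration-optimization} with $p=1$, thereby reducing the problem to a one-dimensional optimization over the single parameter ($s$ or $\alpha$), and then to carry out that scalar optimization. Throughout I would divide the objective by the constant $\rho^{2}$ and write $r \defeq \varepsilon/\rho$, so that the target is $\Teff(w)\,(r - D_1(w))_+^2$ and everything depends only on $r$, consistent with the remark following~\eqref{problem:measure-concentration-optimization}.

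For the windowing family of size $s$, a direct computation gives $\sum_{t} w_t^2 = 1/s$, hence $\Teff(w) = s$, together with $D_1(w) = \tfrac{1}{s}\sum_{k=1}^{s} k = (s+1)/2$. The objective therefore collapses to $g(s) \defeq s\,(r - (s+1)/2)_+^2$, to be maximized over integers $s \in [T]$. I would relax $s$ to a continuous variable on the region $s < 2r-1$ where the positive part is active, and show $g$ is unimodal there: writing $u \defeq r - (s+1)/2$, one finds $g'(s) = u\,(u - s)$, which is positive for small $s$ and changes sign exactly once (since $u - s$ is strictly decreasing), so the unique interior maximizer is $s^\star = (2r-1)/3$. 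Because $g$ is unimodal, the best integer lies in $\{\lfloor s^\star\rfloor, \lceil s^\star\rceil\}$; clamping to $[1,T]$ yields the two candidates in part~(\ref{proposition:optimal-window-size}).

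For the smoothing family I would first pass to the limit $T\to\infty$, where the normalized geometric weights $w_{T-j} = \alpha(1-\alpha)^j$, $j\ge 0$, sum to $1$ and admit closed-form series: $\sum_t w_t^2 = \alpha^2/(1-(1-\alpha)^2) = \alpha/(2-\alpha)$, so $\Teff(w) = (2-\alpha)/\alpha$, while $D_1(w) = \alpha\sum_{j\ge0}(j+1)(1-\alpha)^j = 1/\alpha$. The objective becomes $h(\alpha) \defeq \tfrac{2-\alpha}{\alpha}\,(r - 1/\alpha)_+^2$ over $\alpha \in (0,1]$. On the active region $\alpha > 1/r$ I would write $h(\alpha) = (2-\alpha)(r\alpha-1)^2/\alpha^3$ and set the derivative to zero through the condition $N'(\alpha)\,\alpha = 3\,N(\alpha)$ with $N(\alpha) \defeq (2-\alpha)(r\alpha-1)^2$. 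The key simplification is the factorization $N'(\alpha) = (r\alpha-1)\bigl(1 + 4r - 3r\alpha\bigr)$, so that the common factor $(r\alpha-1)$ cancels and the stationarity condition reduces to the linear equation $\alpha(1+r) = 3$, giving $\alpha^\star = 3/(r+1)$. Since $h$ vanishes at $\alpha = 1/r$ and has this single interior critical point, it is unimodal, so the constrained maximizer over the feasible interval $\alpha \in [\,\min(1/r,1),\,1\,]$ is $\alpha^\star$ projected onto that interval. This is exactly the clamped expression in part~(\ref{proposition:optimal-decay-rate}), with the lower endpoint $(\rho/\varepsilon)_{[0,1]}$ arising from the activation threshold $\alpha > 1/r = \rho/\varepsilon$ and the cap at $1$ from $\alpha \le 1$.

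The main obstacle is the smoothing case: showing that the seemingly cubic first-order condition for the rational objective $h$ collapses to a linear equation. The cancellation of the factor $(r\alpha-1)$ is precisely what yields a closed form, and it hinges on the factorization of $N'(\alpha)$ above. A secondary technical point is justifying the $T\to\infty$ passage—confirming that the finite-history smoothing objective converges to $h$ so that optimizing the limit is the correct asymptotic statement—together with a careful boundary analysis ensuring the projection interval and its clamping to $[0,1]$ correctly cover the regimes $r \ge 2$, $1 \le r < 2$, and $r < 1$. By contrast, the windowing case is routine once unimodality of $g$ is established.
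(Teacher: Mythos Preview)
Your proposal is correct and follows essentially the same route as the paper's own (commented-out) proof: substitute each weight family into~\eqref{problem:measure-concentration-optimization}, compute $\Teff$ and $D_1$ in closed form (yielding $s\bigl(\varepsilon-\tfrac12(s+1)\rho\bigr)_+^2$ and $\bigl(\tfrac{2}{\alpha}-1\bigr)\bigl(\varepsilon-\tfrac{1}{\alpha}\rho\bigr)_+^2$ respectively), and locate the interior maximizer by first-order conditions together with unimodality. Your treatment of part~(\ref{proposition:optimal-decay-rate}) is in fact more detailed than the paper's, which simply says ``the proof is similar to that of~(\ref{proposition:optimal-window-size})''; your factorization $N'(\alpha)=(r\alpha-1)(1+4r-3r\alpha)$ and the resulting cancellation down to $\alpha(1+r)=3$ are exactly the calculation the paper leaves implicit.
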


%\begin{comment}%%%
\begin{proof}
\eqref{proposition:optimal-window-size}: For windowing with $s\in[T]$, problem~\eqref{problem:measure-concentration-optimization} becomes
\begin{equation*}
    \maximize_{s\in[T]} ~~ f(s) \defeq s\Bigl(\epsilon-\tfrac{1}{s}\tsum{t=1}{s} t\rho\Bigr)^2_+ = s\Bigl(\epsilon-\tfrac{1}{2}(s+1)\rho\Bigr)_+^2.% ~~ \subjectTo ~~ s\in[T].
\end{equation*}
Here the function $f(s) = 0$ for $s \geq 2\epsilon/\rho-1$, and thus we equivalently
\begin{equation}\label{problem:optimize-windowing-with-restriction}
    \maximize_{s\in[T]} ~~ g(s) \defeq s\Bigl(\epsilon-\tfrac{1}{2}(s+1)\rho\Bigr)^2 ~~ \subjectTo ~~  s \leq 2\epsilon/\rho-1.
\end{equation}
The function $g$ is initially increasing and has at most one turning point on $(-\infty,2\epsilon/\rho-1]$. Differentiating, it has the unconstrained first-order optimality condition 
\begin{equation*}
    \frac{\drv g(s)}{\drv s} = \Bigl(\epsilon-\tfrac{1}{2}(s+1)\rho\Bigr)^2-s\rho\Bigl(\epsilon-\tfrac{1}{2}(t+1)\rho\Bigr) = 0,
\end{equation*}
which has solutions $\frac{1}{3}(2\epsilon/\rho-1)$ and $2\epsilon/\rho-1$. The former maximizes $g$ on $(-\infty,2\epsilon/\rho-1]$. Since the function~$g$ is univariate, we conclude that one of the adjacent integers within $[T]$ solves \eqref{problem:optimize-windowing-with-restriction}.

\eqref{proposition:optimal-decay-rate}: For smoothing with $\alpha\in(0,1]$, as $T\to\infty$ the objective function in \eqref{problem:measure-concentration-optimization} becomes
\begin{equation*}
\Bigl(\tsum{t=1}{\infty} \bigl(\alpha(1-\alpha)^{t-1}\bigr)^2\Bigr)^{-1} \Bigl(\epsilon-\tsum{t=1}{\infty}\alpha(1-\alpha)^{t-1}t\rho\Bigr)_+^2 = \Bigl(\tfrac{2}{\alpha}-1\Bigr)\Bigl(\epsilon-\tfrac{1}{\alpha}\rho\Bigr)_+^2.
\end{equation*}
(Here we have used a reversal of indexing.) Maximizing in $\alpha$, the proof is similar to that of~\eqref{proposition:optimal-window-size}.\looseness=-1
\end{proof}
%\end{comment}%%%

Proposition~\ref{proposition:windowing-and-smoothing-near-optimal}\eqref{proposition:optimal-window-size} offers a natural interpretation. Ignoring the integer projection, the optimal window size grows approximately linearly with~$\epsilon/\rho$: doubling the radius $\epsilon$ of the Wasserstein ball roughly doubles the optimal look-back horizon. Intuitively, larger ambiguity radii tolerate greater distributional drift, making it beneficial to use older (less relevant) observations. %into the weighted empirical distribution.

The optimal weightings of %identified by
Theorem~\ref{theorem:optimal-weights-p>=1} yield the smallest (least conservative) radius~$\epsilon$ that ensures coverage of the next-period distribution~$\Prob_{T+1}$ at a prescribed confidence level~$1-\beta\in(0,1]$. In the stationary case ($\rho=0$), this radius tends to zero as~$T$ increases. In contrast, when~$\rho>0$, the ambiguity radius $\epsilon$ cannot vanish---its size must reflect the drift. Our next result quantifies how the minimal confidence radius scales with~$\rho$ (and~$\beta$); for clarity we focus on the case~$p=1$.

\begin{proposition}[Scaling of the confidence radius]\label{prop:explicit-radius-activation-free}
Under the assumptions of Theorem~\ref{theorem:time-heterogeneous-concentration-inequality}, let $p = 1$. For any $\beta \in (0,1)$, there exist constants $c_1,c_2,c_3, c_4 > 0$ (independent of $T$, $\rho$, and $\beta$), such that, if the history length~$T \geq  \bigl(\tfrac{12}{c_2}\rho^{-2}\log(1/\beta)\bigr)^{1/3}$ and the weights $w\in\weights_T$ are optimally chosen, then the minimal ambiguity radius $\epsilon(\beta,\rho)$ for which
\[
\Pr\biggl[
W_1\Bigl(\tMeasureSum_{t=1}^T w_t \pmProb_{\rxi_t},\Prob_{T+1}\Bigr) \ge \epsilon(\beta,\rho)
\biggr]
\le \beta,
\]
is given by
\[
\epsilon(\beta,\rho)
=
\begin{cases}
c_1\Bigl(\rho\cdot\log(1/\beta)\Bigr)^{1/3} + c_2\Bigl(\rho^{2q}\cdot\log(1/\beta)^{-q}\Bigr)^{1/3} 
& \text{if}~\rho < \rho^{\star}(\beta), \\
\rho + c_3\cdot\log(1/\beta)^{1/2} + c_4 
& \text{if}~\rho \ge \rho^{\star}(\beta),
\end{cases}
\]
for a threshold $\rho^{\star}(\beta) = \bigl(\tfrac{12}{c_2}\log(1/\beta)\bigr)^{1/2}$.
\end{proposition}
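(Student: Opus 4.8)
The plan is to invert the tail bound of Theorem~\ref{theorem:time-heterogeneous-concentration-inequality} into an explicit radius as a function of the weighting, and then to minimize this radius over a one-parameter family of window weightings, which already suffices to obtain the claimed \emph{upper} bound on the radius achieved by the optimal weighting. Write $L \defeq \log(1/\beta)$. Specializing \eqref{equation:time-heterogeneous-concentration-inequality} to $p=1$ (so that $(\varepsilon-D_p(w)\rho)_+^p=(\varepsilon-D_1(w)\rho)_+$) and requiring the right-hand side to be at most $\beta$ gives, for any weighting $w$ with both positive parts active, the minimal admissible radius
\[
\varepsilon(w) = D_1(w)\rho + c_2\Teff(w)^{-q} + \sqrt{\frac{L}{c_1\Teff(w)}}.
\]
Because the optimal weighting minimizes the violation probability at each radius, and hence minimizes $\varepsilon(w)$ over $\weights_T$, it suffices to exhibit \emph{any} weighting whose radius obeys the stated formula; guided by Proposition~\ref{proposition:windowing-and-smoothing-near-optimal} I would take a window.

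For a window of size $s\in[T]$, i.e.\ uniform mass $1/s$ on the $s$ most recent observations, one has $\Teff(w)=s$ and $D_1(w)=(s+1)/2$, so
\[
\varepsilon(s) = \tfrac{s+1}{2}\rho + c_2 s^{-q} + \sqrt{\frac{L}{c_1 s}}.
\]
Relaxing $s$ to a continuous variable, the next step is to minimize the two dominant terms $\tfrac{s}{2}\rho$ and $\sqrt{L/(c_1 s)}$; their stationarity condition yields $s^\star \asymp (L/\rho^2)^{1/3}$. Substituting $s^\star$ back, the drift and concentration terms both scale as $(\rho L)^{1/3}$, while the residual term $c_2(s^\star)^{-q}\asymp(\rho^{2q}L^{-q})^{1/3}$ produces the second summand; after relabelling constants this is exactly the small-drift branch $c_1(\rho\log(1/\beta))^{1/3} + c_2(\rho^{2q}\log(1/\beta)^{-q})^{1/3}$.

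The two regimes are then distinguished by whether the unconstrained minimizer $s^\star$ is feasible. When $\rho < \rho_\star(\beta)\asymp\sqrt{L}$ we have $s^\star>1$, and the hypothesis $T \ge (\tfrac{12}{c_2}\rho^{-2}L)^{1/3}$ guarantees $s^\star\le T$, so a window of size $\lceil s^\star\rceil$ is admissible and delivers the small-drift branch (integer rounding only perturbs constants). When $\rho\ge\rho_\star(\beta)$ the minimizer collapses to the boundary $s=1$; taking the single most recent sample gives $\Teff(w)=1$ and $D_1(w)=1$, whence $\varepsilon(1)=\rho + c_2 + \sqrt{L/c_1}\le \rho + c_3\sqrt{L}$, the large-drift branch. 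The threshold $\rho_\star(\beta)=(\tfrac{12}{c_2}L)^{1/2}$ is precisely the value of $\rho$ at which $s^\star=1$, which is why it shares the constant appearing in the feasibility hypothesis on $T$.

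The hard part will be the bookkeeping around the activation (positive-part) conditions and the constant tracking rather than anything conceptually deep: one must verify that at the chosen pair $(s,\varepsilon)$ both positive parts in \eqref{equation:time-heterogeneous-concentration-inequality} are active so that the inverted radius formula is legitimate, that the integer rounding of $s^\star$ remains compatible with $s\le T$ under the stated $T$-threshold, and that the additive constant $c_2$ in the $s=1$ case can be absorbed into $c_3\sqrt{L}$ throughout the regime $\rho\ge\rho_\star(\beta)$. These checks are what pin down the explicit constant $12$, the coefficients $c_1,c_2,c_3$, and the exact form of $\rho_\star(\beta)$.
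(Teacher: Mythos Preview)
Your approach mirrors the paper's: invert the tail bound of Theorem~\ref{theorem:time-heterogeneous-concentration-inequality}, insert a one-parameter family of weights indexed by the support size $s$, balance the drift term against the concentration term to obtain $s^\star\asymp(L/\rho^2)^{1/3}$, and split into the two regimes at $s^\star=1$. The only difference is that the paper uses the \emph{triangular} weights of Proposition~\ref{proposition:optimal-weights-p=1} (yielding $D_1(w(s))=(s+2)/3$ and $\Teff(w(s))\ge 3s/4$) rather than your window weights ($D_1=(s+1)/2$, $\Teff=s$); since both families have the same scaling in $s$ this affects only the constants, and your observation that any feasible weighting upper-bounds the radius achieved by the optimal one makes the substitution harmless.
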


\begin{proof}
With $p=1$, Theorem~\ref{theorem:time-heterogeneous-concentration-inequality} yields
\begin{equation}\label{eq:theorem1}
\Pr \biggl[W_1 \Bigl(\tMeasureSum_{t=1}^{T} w_t \pmProb_{\rxi_t},\Prob_{T+1}\Bigr)\ge \epsilon \biggr] \le \exp \biggl(-c_2 \Teff(w)\Bigl(\bigl(\epsilon-D_1(w)\rho\bigr)_+ - c_1\Teff(w)^{-q}\Bigr)_+^{2}\biggr).
\end{equation}
Our goal is to find %determine
the smallest value of 
$\epsilon$ for which the %exponential on 
the right-hand side of~\eqref{eq:theorem1} is at most $\beta$. We assume %that 
$T \geq \bigl(\tfrac{12}{c_2}\rho^{-2}\log(1/\beta)\bigr)^{1/3}$ and use %the 
triangular weights~$w(s)\in\weights_T$ supported on the most recent~$s\in[T]$ observations, as given in %specified in 
Proposition~\ref{proposition:optimal-weights-p=1}. A direct calculation shows
\[
D_1(w(s))=\frac{s+2}{3}
\quad \text{and} \quad
\Teff(w(s))=\frac{3s(s+1)}{2(2s+1)}  \geq \frac{3s}{4}.
\]
The %probability 
bound~\eqref{eq:theorem1} is %guaranteed to be 
at most $\beta$ once
\begin{equation}\label{eq:theorem2}
\bigl(\epsilon-D_1(w(s))\rho\bigr)_+ - c_1\Teff(w(s))^{-q}
\geq
\biggl(\frac{\log(1/\beta)}{c_2\Teff(w(s))}\biggr)^{1/2},
\end{equation}
since in that case the exponent on the right-hand side of~\eqref{eq:theorem1} does not exceed $-\log(1/\beta)$. Note that $D_1(w(s))$ and $\Teff(w(s))$ both increase with $s$. To find %obtain 
the smallest $\epsilon$ for which %the inequality~
\eqref{eq:theorem2} holds, we choose $s$ to balance these terms, %opposing effects, %We define $\rho^{\star}(\beta)\defeq\bigl(\tfrac{12}{c_2}\log(1/\beta)\bigr)^{1/2}$ and 
analyzing the cases %the two regimes 
$\rho < \rho^{\star}(\beta)$ and $\rho \ge \rho^{\star}(\beta)$ separately.

\smallskip

\emph{Case~1: $\rho < \rho^{\star}(\beta)$.}
Set %Define 
\[
s^{\star} = \biggl(\frac{12}{c_2}\rho^{-2}\log(1/\beta)\biggr)^{1/3}
\quad\text{and}\quad s=\lceil s^{\star}\rceil.
\]
Our earlier assumption on $T$ ensures $s \in [T]$. Since $D_1(w(s)) \le (s^{\star}+3)/3$, we have
\[
D_1(w(s))\rho \leq \frac{s^{\star}}{3}\rho +  \rho   \leq  \frac{4}{3} \biggl(\frac{12}{c_2}\rho\log(1/\beta) \biggr)^{1/3},
\]
where the second inequality uses that $\rho<\rho^{\star}(\beta)$. Moreover,
\[
\biggl(\frac{\log(1/\beta)}{c_2\Teff(w(s))}\biggr)^{1/2}  \leq \biggl(\frac{4\log(1/\beta)}{3c_2s^{\star}}\biggr)^{1/2} 
 = \frac{2}{\sqrt{3}}\biggl(\frac{\log(1/\beta)\rho}{\sqrt{12}c_2}\biggr)^{1/3}.
\]
Define
\[
c_3 \defeq \frac{4}{3}\biggl(\frac{12}{c_2}\biggr)^{1/3}
+\frac{2}{\sqrt{3}} \biggl( \frac{1}{\sqrt{12}c_2} \biggr)^{1/3}
\quad \text{and} \quad
c_4 \defeq c_1\biggl(\frac{4}{3}\biggr)^{q}\biggl(\frac{12}{c_2}\biggr)^{-q/3},
\]
and set 
\[
\epsilon(\beta,\rho)=c_3\biggl(\rho\log(1/\beta)\biggr)^{1/3} + c_4\biggl(\rho^{2q}\log(1/\beta)^{-q}\biggr)^{1/3}.
\]
Then
\[
\epsilon(\beta,\rho)-D_1(w(s))\rho  \ge  \biggl(\frac{\log(1/\beta)}{c_2\Teff(w(s))}\biggr)^{1/2} \quad \text{and} \quad c_4\biggl(\rho^{2q}\log(1/\beta)^{-q}\biggr)^{1/3} \geq c_1 \Teff(w(s))^{-q},
\]
using $\Teff(w(s)) \ge 3s^{\star}/4$. We conclude that this choice of $s$ and $\epsilon(\beta,\rho)$ satisfies~\eqref{eq:theorem2}, 
and therefore that the right-hand side of~\eqref{eq:theorem1} is bounded above by~$\beta$.

\emph{Case~2: $\rho\ge \rho^{\star}(\beta)$.}
We set $s=1$ so that $D_1(w)=1$ and $\Teff(w(s))=1$, and with
\[
\epsilon(\beta,\rho)=\rho+ \biggl(\frac{\log(1/\beta)}{c_2}\biggr)^{1/2} +c_1,
\]
the inequality~\eqref{eq:theorem2} holds as an equality, 
and hence the right-hand side of~\eqref{eq:theorem1} equals~$\beta$. 

In each case we have established the result after an appropriate relabeling of the constants.
\end{proof}

Since $q\in(0,1/2)$, Proposition~\ref{prop:explicit-radius-activation-free} implies that, for small drift, the minimal confidence radius scales as at most $\rho^{1/3}$. For large drift, the optimal weighting strategy is to place all weight on the most recent observation, the drift dominates, and the minimal confidence radius scales as $\rho$. For comparison, using the intersection-of-balls approach, a union-bound argument shows that the radius of the $t$-{th} ball must satisfy a lower bound of the form \((T-t+1)\rho+ c\) to achieve~$1-\beta$ confidence  (see also \parencite[Proposition~4]{RychenerEtAl2024HeteroDRO}). This lower bound scales linearly in both $T$ and~$\rho$, whereas our weighted scheme achieves a radius that is effectively independent of~$T$ and that scales as~$\rho^{1/3}$ in the small-drift regime, becoming linear in~$\rho$ only when the drift is large.

\section{Numerical Experiments}\label{sec:num-results}
In this section we study a decision problem faced by a newsvendor supplying a random demand that evolves over time. We approach the problem using DRO and compare our weight-based approach to the intersection-based approach of \parencite{RychenerEtAl2024HeteroDRO}. We also compare to SAA and smoothing. %Performance is assessed using a training-and-testing approach, representing that attainable in~practice.

Consider a newsvendor with underage cost $\costUnder > 0$ and overage cost $\costOver> 0$ supplying a random demand~$\rxi$ supported on $\Xi\subseteq\reals$. For an ambiguity set of demand distributions $\cP\subseteq\mathfrak{P}(\Xi)$, the DRO order quantity solves \looseness=-1
\begin{equation}\label{problem:dro-newsvendor}
    \minimize_{x\in\Xi}~~\sup_{\Qrob\in\cP}\Expt_{\Qrob}\Bigl[ \costUnder(\rxi-x)_+ + \costOver(x-\rxi)_+ \Bigr].
\end{equation}
Here $\cP$ may be a Wasserstein ball centered around a weighted empirical distribution, or an intersection of multiple Wasserstein balls each centered around their own empirical distributions. Since the newsvendor cost function can be expressed as the pointwise maximum of a finite number of affine functions, in the aforementioned cases the DRO problem can be reformulated as a convex-cone program following \parencite[Corollary~2]{RychenerEtAl2024HeteroDRO}. In what follows we use $p=2$ for the Wasserstein ball--based ambiguity sets.

In our experiments we set $\costUnder=4$ and $\costOver=1$. For the demand process we use a mixture of binomial distributions and at each time $t$ set 
\[\rxi_t\sim 0.9\cdot\mathop{\text{Binomial}}(1000,p_t) +  0.1\cdot\mathop{\text{Binomial}}(1000,q_t),\] 
where $p_t$ and $q_t\in[0,1]$ are time varying. This amounts to an economy with $1000$ consumers and two possible configurations, capturing switches from surplus to shortage. Each consumer independently demands $1$ unit with probability $p_t$ or $q_t$, so $\rxi_t$ is supported on $\Xi=[0,1000]$. Here $p_t$ and $q_t$ evolve independently over time from initial values~$p_1 = 0.1$ and $q_1 = 0.5$, following random walks with $\text{Triangular}(-\delta,\delta,0)$ innovations. (We project these values onto $[0,1]$ to ensure they remain valid probabilities.) The value of $\delta \geq 0$ parametrizes the extent of nonstationarity. %Note that this setup meets the conditions of the results of Section~\ref{section:concentration-of-measure}.

To assess performance, we use a training-and-testing approach, averaging over $1000$ simulations. Each simulation uses $T = 100$ total historical demand observations. Newsvendor orders are placed from times $t = 71$ to $100$ using prior observations, and costs are based on the demand in the next period. The parameter values that minimize this total training cost are then used in a final ordering decision, which incurs a testing cost. This is computed by using an analytic formula for the expected newsvendor loss under binomial demand, combined with an additional average over $1000$ simulated jumps from time $t = 100$ to $101$.

Our weight-based approach for the ambiguity set in \eqref{problem:dro-newsvendor} uses two parameters, $\epsilon$~and~$\rho$, with the observation weights in the central empirical distribution chosen according to the characterization of Theorem~\ref{theorem:optimal-weights-p>=1}. For the intersection-based approach, which uses constituent ambiguity balls centered around each observation, we again use the two parameters $\epsilon$~and~$\rho$ and set the radius of the \(t\)-th ball to \(\epsilon + (T - t + 1)\rho\). (When the realizations of the observations are such that the intersection is empty, we increase the radii until it is nonempty.) SAA and smoothing consider only the equally weighted empirical and the decay rate--parametrized weighted empirical on the observations, respectively.

Table~\ref{table:parameter-ranges} shows the parameter ranges searched over when training each approach. Here for scalars~\(a \leq b\) and a positive integer \(n\), we use \(\linRange(a,b;n)\) for the set of \(n\) equally spaced values between \(a\) and \(b\), inclusive. Similarly, for \(a \leq b \in (0,\infty)\) and a positive integer \(n\), we use \(\logRange(a,b;n)\) for the set of \(n\) values forming a geometric sequence between \(a\) and \(b\), inclusive. This logarithmic spacing is the natural choice for tuning the decay rate \(\alpha\) and the ratio \(\rho/\epsilon\).

\begin{table}[H]
\centering
\caption{\textbf{Parameter Ranges.}}
\label{table:parameter-ranges}
\centerline{\begin{tabular}{r c}
\toprule
\midrule
\addlinespace
    Decay rate & $\alpha \in \{0\}\cup\logRange(10^{-4},1;30)$ \\
    Ambiguity radius & $\epsilon \in \{0\}\bigcup_{i=0}^2 \linRange(10^i,10^{i+1};10)$  \\
    Shift bound & $ \rho : \rho/\epsilon\in \{0\}\cup\logRange(10^{-4},1;30)$\footnotemark\\
\addlinespace
\bottomrule
\end{tabular}}
\end{table}
\footnotetext{For $\epsilon = 0$ we treat the ratio $\rho/\epsilon$ separately.}

We formulate problem~\eqref{problem:dro-newsvendor} as a convex-cone program in \texttt{Julia}~\parencite{Julia} using \texttt{JuMP.jl}~\parencite{JuMP} and \texttt{MathOptInterface.jl}~\parencite{mathoptinterface}, and solve it with \texttt{Gurobi}~\parencite{gurobi}. Figure~\ref{figure:method-performance} graphs the average test performance of each approach for different extents of nonstationarity. %, as parameterized by $\delta$.
\begin{figure}[H]
    \centering
    \hspace{-1.5cm}\includegraphics[width=292pt]{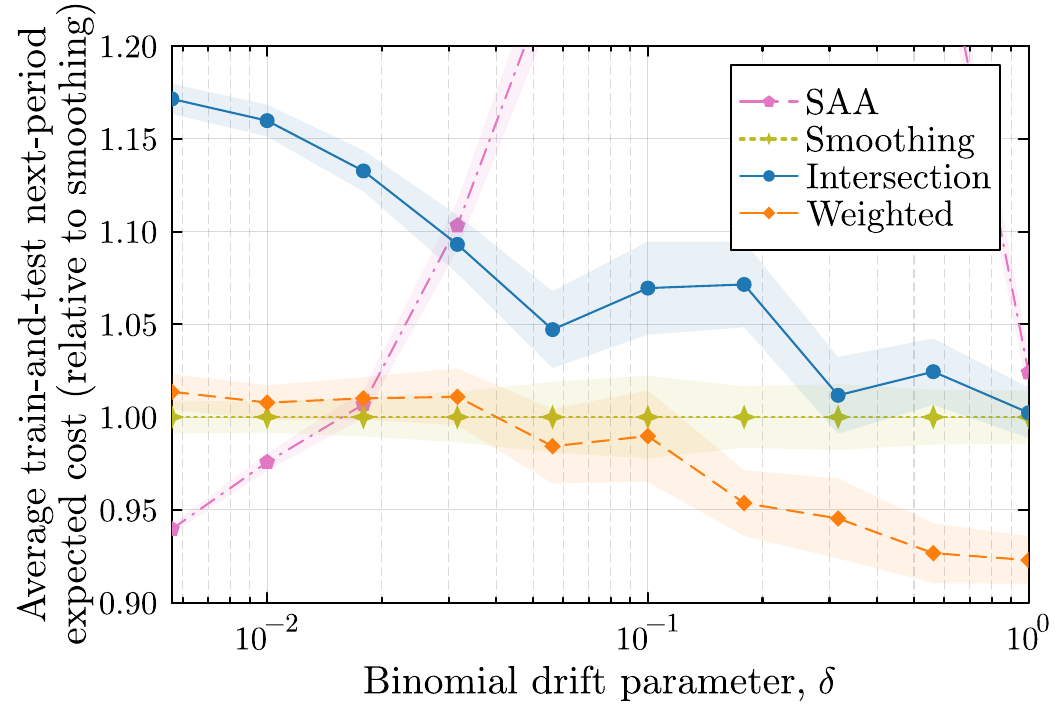}
    \caption{\textbf{Expected Out-of-Sample Performance.} Bands present standard-error ranges.}
    \label{figure:method-performance}
\end{figure}
\noindent For a small extent of nonstationary, unsurprisingly, SAA performs the best, while the intersection approach performs the worst as it cannot recover the unweighted empirical (see Example~\ref{example:inconsistency-of-the-intersection-ambiguity-set}). Even though smoothing can recover SAA for $\alpha=0$, it performs worse than SAA due to the effects of training. In the same way, smoothing performs slightly better than our weighted approach, as it uses only one parameter (compared to two), making it easier to train. As the extent of nonstationarity increases, SAA performs the worst, and our weighted approach performs the best. The intersection approach continues to perform worse than our weighted approach as the radii required to achieve a prescribed confidence level depend on the history length, whereas Proposition~\ref{prop:explicit-radius-activation-free} shows this is not the case for our weighted approach. (Although we note that Proposition~\ref{prop:explicit-radius-activation-free} is proved in the different context of $p=1$.)

\section{Extensions}\label{sec:conclusions}
Our paper introduces a nonparametric framework for DRO with nonstationary data. By centering Wasserstein balls at weighted empirical distributions and deriving new concentration bounds, we provide a principled approach to trading off statistical variance against~nonstationarity.

We view our contribution as a first step that opens up several promising directions for future research. First, our concentration results bound the probability that the unknown data-generating distribution lies within the ambiguity set, and therefore inherit the curse of dimensionality. It would be valuable to investigate whether our analysis can be extended to dimension-free concentration arguments based on the equivalences between DRO and regularization \parencite{BlanchetKangMurthy2019, gao2023finite}.

Second, our study focuses on Wasserstein distances. There is a rich literature on alternative discrepancy measures between probability distributions, such as $\phi$-divergences, which are particularly well suited for distributions with discrete support \parencite{BayraksanLove2015, boucheron2003concentration}. The notion of nonstationarity with bounded inter-period drift applies equally to these alternative discrepancy measures, and it is natural to ask what the corresponding analogues of our results would be in these settings.

Finally, our analysis assumes that the historical samples are independent. An important open question is whether our results can be generalized to settings with controlled dependence, in analogy to classical generalizations of the central limit theorem \parencite{Billingsley1995, Durrett2019}. We expect that such extensions are possible, but technically involved.

\section*{Acknowledgements}%%%
The authors would like to thank Andrew Philpott who has contributed through helpful comments and discussions on this work. The first author acknowledges support from the Claude McCarthy Fellowship and UOCX2117 MBIE Catalyst Fund New Zealand--German Platform for Green Hydrogen Integration~(HINT). The last author acknowledges support from the Engineering and Physical Sciences Research Council~(EPSRC) grant~EP/W003317/1. The authors also acknowledge high-performance computing resources provided by the Imperial College Research Computing Service (link:~\url{http://doi.org/10.14469/hpc/2232}).%%%

\printbibliography

\end{document}